\def\leq{\leqslant}
\def\geq{\geqslant}
\def\N{\mathbb{N}}
\def\R{\mathbb{R}}
\def\p{\mathbb{P}}
\def\omg{\ensuremath{\underline{\omega}}}
\def\eps{\varepsilon}
\def\qand{\quad\text{and}\quad}
\def\A{\ensuremath{A^{(q)}}}
\def\e{{\ensuremath{\rm e}}}
\def\I{\ensuremath{{\bf 1}}}
\DeclareSymbolFont{cyrillic}{T2A}{cmr}{m}{n}
\DeclareMathSymbol{\D}{\mathalpha}{cyrillic}{196}
\newtheorem{proposition}{Proposition}[section]
\newtheorem{example}{Example}[section]
\newtheorem{lemma}[proposition]{Lemma}
\newtheorem{definition}[proposition]{Definition}
\newtheorem{theorem}[proposition]{Theorem}
\newtheorem{remark} [proposition]{Remark}
\theoremstyle{definition}
\newtheorem*{condition}{Condition}
\begin{document}
\bibliographystyle{plain}

\title {Extreme Value Theory for Piecewise Contracting Maps with Randomly Applied Stochastic Perturbations}
\author{Davide Faranda \thanks{Laboratoire SPHYNX, Service de Physique de l'Etat Condens\'e, DSM, CEA Saclay, CNRS URA 2464, 91191 Gif-sur-Yvette, France. e-mail: $<$davide.faranda@cea.fr$>$.} \and Jorge Milhazes Freitas \thanks {Centro de Matem\'{a}tica \& Faculdade de Ci\^encias da Universidade do Porto, Rua do
Campo Alegre 687, 4169-007 Porto, Portugal. e-mail:$<$jmfreita@fc.up.pt$>$.}
\and Pierre Guiraud \thanks{CIMFAV, Facultad de Ingenier\'ia, Universidad de Valpara\'iso, Valpara\'iso, Chile. e-mail: $<$pierre.guiraud@uv.cl$>$.}
\and Sandro Vaienti
\thanks{Aix Marseille Universit\'e, CNRS, CPT, UMR 7332, 13288 Marseille, France and
Universit\'e de Toulon, CNRS, CPT, UMR 7332, 83957 La Garde, France.
e-mail:$<$vaienti@cpt.univ-mrs.fr$>$.}}
\maketitle

\begin{abstract} We consider globally invertible and piecewise contracting maps in higher dimensions and we perturb them with a particular kind of noise introduced by Lasota and Mackey. We got random transformations which are given by a stationary process: in this framework we develop an extreme value theory for a few classes of observables and we show how to get the (usual) limiting distributions together with an extremal index depending on the strength of the noise.
\end{abstract}

\section{Introduction}
In a previous note \cite{FFGV} we announced a few results on the application of Extreme Value Theory (EVT) to one-dimensional piecewise contracting maps (PCM)  whenever the latter are perturbed with a particular kind of noise. This noise was introduced in the book by Lasota and Mackey \cite{LM} and it was called {\em randomly applied stochastic perturbation} (RASP). It turns out that it is particularly useful to study PCM especially because it allows  very efficient representations of the transfer operator (Perron-Frobenius) and of the stationary measures. We now generalize to higher dimensional PCM our previous results by proving the existence of extreme value laws (EVL) for some kinds of observables associated to rare visits in the neighborhood of a given point. As a matter of fact, we first consider  globally invertible maps defined on compact subsets of $\mathbb{R}^n$ and which are diffeomorphisms but on some singular (Borel) sets of zero Lebesgue measure, so they are not necessarily contractions. The RASP belongs to the class of {\em random transformations} (see next section), which naturally define a Markov chain with associated a unique absolutely continuous  stationary measure (see Th. 10.4.2 in \cite{LM}). By taking the direct product of this stationary measure with the probability distribution  of the    concatenated maps, we get the (annealed) probability $\mathbb{P}$ which will govern the statistical properties of our system. In particular the stochastic process given by the observation along a random orbit will be stationary according to $\mathbb{P}.$  We will first prove a general result about decay of correlations which will not require strong assumptions on the functional spaces of the observables (Proposition \ref{Prop:Correlations}). We will therefore use this result to recover some sort of asymptotic independence in order to prove convergence of the distribution of the maxima toward the Gumbel's law. This will be accomplished in two other steps: first we will get an explicit expression of the affine coefficients defining the sequence of levels $u_n$ needed to rescale the distribution of the maximum to avoid degenerate limits (Propositions \ref{PT},\ref{ANBN},\ref{FIN}); then we will control the short returns in Proposition \ref{EVLLN}. Whenever we will consider rare visits in the neighborhood of a periodic cycle, we will prove the existence of an extremal index less than $1$, and in particular equal to the magnitude of the noise. \\We would like to stress that we already investigated the EVT for random transformations obtained by perturbing with additive noise; we applied it to uniformly expanding maps \cite{AFV} and even to rotations \cite{FFLTV}. In both cases we got the existence of an EVL around any point and with extremal index equal to $1$. It seems that RASP is much more suitable for piecewise contractions and does not smoothen out  completely the periodicity features of the unperturbed map, as this is reflected in the persistency of an extremal index less than one. A few numerical simulations seem to suggest however the existence of an EVL even when we perturb PCM  with additive noise \cite{FFGV}.\\

We now briefly recall the main statements of the EVT.  Let us  suppose that $(Y_n)_{n\in\N}$ is a sequence of real-valued random variables defined on the probability space $(\Psi, \mathbb{P}).$ We will be interested in the distribution of the maximum $M_n:=\max\{Y_0, Y_1, \dots, Y_{n-1}\}$ when $n\rightarrow \infty.$ It is well known that the limiting distribution is degenerate unless one proceed to a suitable re-scaling of the levels of exceedances. The precise formulation is the following:
 we have an Extreme Value Law  for $(M_n)_{n\in\N}$ if there is a non-degenerate distribution function
  $H:\R\to[0,1]$ with $H(0)=0$ and,  for every $\tau>0$, there exists a sequence of levels $(u_n(\tau))_{n\in\N}$ such that
\begin{equation}
\label{eq:un}
\lim_{n\to\infty}n\mathbb{P}(Y_0>u_n)\to \tau,
\end{equation}
and for which the following holds:
\[
\lim_{n\to\infty}\mathbb{P}(M_n \le u_n)\rightarrow 1-H(\tau).
\]

The motivation for using a normalizing sequence $(u_n)_{n\in\N}$ satisfying \eqref{eq:un} comes from the case when $(Y_n)_{n\in\N}$ are independent and identically distributed. In this i.i.d.\ setting, it is clear that $\mathbb{P}(M_n\leq u)= (F(u))^n$, being $F(u)$ the cumulative distribution function for the variable $u$. Hence, condition \eqref{eq:un} implies that
\[
\mathbb{P}(M_n\leq u_n)= (1-\mathbb{P}(Y_0>u_n))^n\sim\left(1-\frac\tau n\right)^n\to e^{-\tau},
\]
as $n\to\infty$.  Note that in this case $H(\tau)=1-e^{-\tau}$ is the standard exponential distribution function. Let us now choose the sequence $u_n=u_n(y)$ as the one parameter family $u_n=y/a_n+b_n$, where $y\in \mathbb{R}$ and $a_n>0,$ for all $n\in \mathbb{N}$. Whenever the variables $Y_i$ are i.i.d. and  for some constants $a_n>0$, $b_n\in\R$, we have $\mathbb{P}(a_n(M_n-b_n)\leq y)\rightarrow G(y)$, where the convergence occurs at continuity points of $G$, and $G$ is non-degenerate, then $G_n$ will converge to one of the three EVLs: Gumbel, Fr\'echet or Weibull.  The law obtained depends on the the common distribution of the random variables.

When $Y_0,Y_1,Y_2,\ldots$ are not independent, the standard exponential law still applies under some conditions on the dependence structure. These conditions will be stated in detail later and  they are usually designated with $D'$ and $D_2$; when they
 hold for  $(Y_n)_{n\in\N}$ then there exists an extreme value law for $M_n$ and $H(\tau)=1-e^{-\tau},$ see Theorem 1 in \cite{FF}. We want to stress that these two conditions alone do not imply the existence of an extreme value law;  they require, even to be checked, that the limit (\ref{eq:un}) holds. It turns out that for the kind of observables we are going to introduce, and which are related to the local properties of the invariant measure, the limit (\ref{eq:un}) is difficult to prove when the invariant measure is not absolutely continuous, since one needs the exact asymptotic behavior of that measure on small balls. Instead it turns out that whenever the systems is randomly perturbed and the invariant measure is absolutely continuous, the existence of the limit (\ref{eq:un}) can be insured with general arguments and in the majority of the cases (see below for a precise meaning), it can be expressed  by a closed formula in terms of the strength of the noise,  Proposition \ref{ANBN} and \ref{FIN} below.  Moreover that formula could be used in a reversed way: since the sequence $(u_n)_{n\in\N}$ is now uniquely determined for any $n$, a numerical sampling for $(u_n)_{n\in\N}$ which provides convergence to the extreme value law, will bring information on the local geometrical  properties of the stationary measure: this approach was successfully used, for instance, in \cite{FV1, FV2}. \\

\section{Random dynamical systems: RASP perturbation}

Let us consider  a sequence of i.i.d. random variables $(W_k)_{k\in \N}$ with values $(\omega_k)_{k\in \N}$ in a space $\Omega_{\varepsilon}$ and with common probability distribution $\theta_{\varepsilon}$. Let $X\subset\R^D$ be a compact set equipped with the Lebesgue measure $m$ defined on the Borel $\sigma$-algebra, and $(f_\omega)_{\omega\in \Omega_{\varepsilon}}$ a family of measurable transformations such that $f_{\omega}:X\to X$ for all $\omega\in \Omega_{\varepsilon}$\footnote{In the following when we will refer to a dynamical system $(X,f,\mu)$ we will mean that $f$ is defined on $X$ and preserves the Borel probability measure $\mu;$ if we will  write $(X,f)$, this will simply  correspond  to the action of $f$ on $X.$ }. Given a point $x\in X$ and a realization $\underline{\omega}= (\omega_1,\omega_2,\dots)\in \Omega_{\varepsilon}^\N$ of the stochastic process $(W_k)_{k\in \N}$, we define the random orbit of $x$ as the sequence $(f^n_{\underline{\omega}}(x))_{n\in\N}$, where
\[
f^0_{\underline{\omega}}(x)=x\quad\text{and}\quad f^n_{\underline{\omega}}(x)=f_{\omega_n}\circ f_{\omega_{n-1}}\circ\cdots\circ f_{\omega_1}(x)\qquad\forall n\geq 1.
\]
The transformations $f_{\omega}$ will be considered as stochastic perturbations of a deterministic map $f$, in the sense that they will be taken in a suitable neighborhood of $f$ whose {\em size} will be determined by the value of $\eps$, see below.  We could therefore  define a Markov process on $X$ with transition function
\begin{equation}\label{gre}
 L_{\varepsilon} (x, A)=\int_{\Omega_{\varepsilon}}\mathbf{1}_{A}(f_{\omega}(x))d\theta_{\varepsilon}(\omega),
 \end{equation}
where $A\in X$ is a measurable set,  $x\in X$ and $\mathbf{1}_{A}$ is the indicator function of the set $A$. A probability measures $\mu_{\varepsilon}$ is called  \textit{stationary} if for any measurable set  $A$ we have:
$$
\mu_{\varepsilon}(A)=\int_{X} L_{\varepsilon}(x,A)d\mu_{\varepsilon}(x)
.$$
We call it an absolutely continuous stationary measure, if it has a density with respect to the Lebesgue measure. \\

We are now ready to introduce the  \em randomly applied stochastic perturbations\em. We first have a Borel  measurable map $f $ acting on the compact set $X;$ then we operate an aleatory reset of the state of the  system at each failure  of a Bernoulli random variable: if $(x_{n})_{n\in\N}$ denotes the successive states of such a random dynamical systems, then at each time $n\in\N$ we have $x_{n+1}=f(x_n)$ with probability $(1-\epsilon)$ and $x_{n+1}=\xi_n$ with probability $\epsilon$, where $\xi_n$ is the realization of a random variable with value in $X$. This kind of perturbation corresponds to the family $(f_\omega)_{\omega\in \Omega_{\varepsilon}}$ of random transformations defined by
\begin{equation}\label{RASP}
f_{\omega}(x)=\eta f(x) + (1-\eta)\xi\qquad\forall\, x\in X,
\end{equation}
where $\omega=(\eta,\xi)$ is a random vector with value in $\Omega_{\varepsilon}=\{0,1\}\times X$. The  two components $\eta$ and $\xi$ of $\omega$ are independent and $\eta$ is a Bernoulli variable with the probability of being $0$ equal to $\eps$, while $\xi$ is a random variable that we will suppose Lebesgue-uniformly distributed on $X$\footnote{In the book of Lasota Mackay \cite{LM} the variable $\xi$ is more generally distributed with a density with respect to the Lebesgue measure; this will not change the results of our paper.}. The joint distribution $\theta_\varepsilon$ of these two components is the product of the Bernoulli measure with weights $(\eps,1- \eps)$ and the uniform measure on $X$.

In order to obtain the stationary measure $\mu_\eps$, let us introduce the random Koopman operator $U_{\eps}:L^{\infty}\to L^{\infty}$ defined for all $\phi\in L^{\infty}$\footnote{From now on $L^1$ and $L^{\infty}$ will be referred to the Lebesgue measure $m$ and the integral with respect to the latter will be denote as $\int (\cdot)\ dx.$} by
\[
U_{\eps}\phi(x):= \int \phi(f_{\omega}(x))d\theta_\varepsilon.
\]
Now, if we take two observables $\phi\in L^{\infty}$ and $\psi\in L^1$, it easy to check that
$$
\int U_{\eps}\phi(x) \psi(x) dx=\int \int \phi( f_{\omega}(x)) \psi(x)d\theta_{\varepsilon} dx=
$$
$$
\eps\int \int\phi(x)\psi(y) dx dy+(1-\eps)\int \phi(f(x))\psi(x)dx\\
= \int \phi(x)P_{\eps}\psi(x) dx,
$$
where $P_\eps$ is the adjoint operator of $U_\eps$, that is the random transfer operator. If we denote $P$  the transfer operator associated to $f$ and $\overline{\psi}= \int \psi(y)dy$, then we have
\begin{equation}\label{PFCONT}
P_{\eps}\psi(x)= (1-\eps)P\psi(x)+\eps \overline{\psi}.
\end{equation}
The stationary measure $\mu_\eps$ verifies $\int \phi(x) d\mu_\eps=\int U_{\eps}\phi(x) d\mu_\eps$ and in our case is  given by $\mu_\eps=h_\eps m$ where $h_\eps\in L^1$ is a density such that $h_\eps=P_{\eps}h_\eps$. Such a density exists and is given by \cite{LM}:
\begin{equation}\label{DENS}
h_\eps=\eps \sum_{k=0}^{\infty}(1-\eps)^kP^k{\bf 1}.
\end{equation}\\

 In the random setting we are developing  the decay of correlations will be of annealed type and it will be formulated in terms of the iterates of the random Koopman operator, in particular we have:

\begin{proposition}\label{Prop:Correlations}
If $\phi\in L^{\infty}$ and $\psi\in L^1\cap L^\infty$, then
\begin{equation}\label{COR}
\left|\int\int  \phi(f^n_{\omg}(x))\psi(x)d\mu_\eps d\theta_\eps^\N-\int \phi(x)d\mu_\eps \int \psi(x)d\mu_\eps\right|\leq 2(1-\eps)^n||\phi||_{L^\infty} ||\psi h_\eps||_{L^1}.
\end{equation}
\end{proposition}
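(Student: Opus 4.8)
The plan is to exploit the explicit form of the random transfer operator $P_\eps$ from \eqref{PFCONT} together with the duality between $P_\eps$ and the random Koopman operator $U_\eps$. First I would rewrite the correlation integral in operator form: since $\xi$ is i.i.d.\ and $\mu_\eps$ is stationary, one has
\[
\int\int \phi(f^n_{\omg}(x))\,\psi(x)\,d\mu_\eps\,d\theta_\eps^\N
=\int U_\eps^n\phi(x)\,\psi(x)\,h_\eps(x)\,dx
=\int \phi(x)\,P_\eps^n(\psi h_\eps)(x)\,dx,
\]
where the last equality is just $n$-fold application of the adjoint relation $\int U_\eps\phi\cdot g\,dx=\int\phi\cdot P_\eps g\,dx$ valid for $\phi\in L^\infty$, $g\in L^1$ (here $g=\psi h_\eps\in L^1$, using $\psi\in L^\infty$ and $h_\eps\in L^1$). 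Likewise the product term is $\int\phi\,d\mu_\eps\int\psi\,d\mu_\eps=\left(\int\psi h_\eps\,dx\right)\int\phi h_\eps\,dx$, and since $h_\eps$ is a fixed point, $h_\eps=P_\eps^n h_\eps$, so this equals $\int\phi\cdot P_\eps^n\big(\overline{\psi h_\eps}\,h_\eps\big)\,dx$ with the shorthand $\overline{g}=\int g\,dx$. Thus the whole quantity to bound is $\left|\int\phi\cdot P_\eps^n\big(\psi h_\eps-\overline{\psi h_\eps}\,h_\eps\big)\,dx\right|$, which by Hölder is at most $\|\phi\|_{L^\infty}\,\big\|P_\eps^n\big(\psi h_\eps-\overline{\psi h_\eps}\,h_\eps\big)\big\|_{L^1}$.

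Next I would show the key contraction property: for any $g\in L^1$ with $\overline g=0$ one has $\|P_\eps^n g\|_{L^1}\le(1-\eps)^n\|g\|_{L^1}$. This is immediate from \eqref{PFCONT}: $P_\eps g=(1-\eps)Pg+\eps\,\overline g=(1-\eps)Pg$ when $\overline g=0$, and $P$ is a (sub)Markov transfer operator so $\|Pg\|_{L^1}\le\|g\|_{L^1}$; moreover $\overline{Pg}=\overline g=0$, so the estimate iterates to give $\|P_\eps^n g\|_{L^1}\le(1-\eps)^n\|g\|_{L^1}$. Applying this with $g=\psi h_\eps-\overline{\psi h_\eps}\,h_\eps$ (which indeed has zero integral since $\overline{h_\eps}=1$) yields the bound
\[
\big\|P_\eps^n\big(\psi h_\eps-\overline{\psi h_\eps}\,h_\eps\big)\big\|_{L^1}
\le(1-\eps)^n\Big(\|\psi h_\eps\|_{L^1}+|\overline{\psi h_\eps}|\,\|h_\eps\|_{L^1}\Big)
\le 2(1-\eps)^n\|\psi h_\eps\|_{L^1},
\]
using $\|h_\eps\|_{L^1}=1$ and $|\overline{\psi h_\eps}|\le\|\psi h_\eps\|_{L^1}$. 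Combining with the Hölder step gives exactly \eqref{COR}.

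The only genuinely delicate point is the very first identity, namely that integrating $\phi\circ f^n_{\omg}$ against $\mu_\eps\otimes\theta_\eps^\N$ produces $\int U_\eps^n\phi\cdot h_\eps\,dx$; this is the statement that $U_\eps^n$ represents the $n$-step annealed transfer of the Markov chain, and it follows by conditioning successively on $\omega_1,\dots,\omega_n$ and using the i.i.d.\ structure of $(W_k)$ together with Fubini, i.e.\ $\int\phi(f^n_{\omg}(x))\,d\theta_\eps^\N=(U_\eps^n\phi)(x)$ for each fixed $x$. Everything else is routine: duality, the trivial contraction of $P$ on $L^1$, and the observation that subtracting the correct multiple of $h_\eps$ puts us in the zero-average subspace on which $P_\eps$ contracts at rate $(1-\eps)$. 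I would also remark that $\psi h_\eps\in L^1$ is all that is needed on the right-hand side, so the hypothesis $\psi\in L^1\cap L^\infty$ is used only to make sense of $\psi h_\eps$ as an $L^1$ function when paired with the bounded $\phi$ in the duality step.
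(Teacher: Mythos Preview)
Your proof is correct and complete, but it follows a different route from the paper's. The paper first derives an explicit iteration formula
\[
P_\eps^n\psi=(1-\eps)^n P^n\psi+\eps\,\overline\psi\sum_{k=0}^{n-1}(1-\eps)^k P^k\mathbf 1,
\]
then substitutes the series representation $h_\eps=\eps\sum_{k\geq 0}(1-\eps)^k P^k\mathbf 1$ into the product term; the partial sum cancels against the first $n$ terms of the series, leaving a tail of size $(1-\eps)^n$ which is bounded termwise. Your argument is more structural: you observe directly from $P_\eps=(1-\eps)P+\eps\,\overline{(\cdot)}$ that on the zero-mean subspace $P_\eps$ reduces to $(1-\eps)P$, hence contracts in $L^1$ at rate $(1-\eps)$; then you project $\psi h_\eps$ onto this subspace by subtracting $\overline{\psi h_\eps}\,h_\eps$. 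This bypasses both the explicit iterate formula and the explicit form of $h_\eps$ (you only use that $h_\eps$ is a probability density fixed by $P_\eps$), and it would generalize verbatim to any perturbation of the form $P_\eps=(1-\eps)P+\eps\Pi$ with $\Pi$ a rank-one projection onto a density. The paper's computation, on the other hand, produces the formula $P_\eps^n\psi=(1-\eps)^n P^n\psi+\eps\,\overline\psi\sum_{k=0}^{n-1}(1-\eps)^k P^k\mathbf 1$ as a byproduct, and this formula is reused later in the paper (in the analysis of short returns and in verifying $\D'_q(u_n)$), so there is a practical reason for their more explicit approach.
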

\begin{proof}  To use the duality of the Koopman operator with the transfer operator $P$, let us introduce the following quantity:
\begin{equation}\label{DDCC}
Cor_m(\phi,\psi,n) := \left|\int  U^n_\eps(\phi(x))\psi(x)dx-\int \phi(x)d\mu_\eps \int \psi(x)dx\right|=
\left|\int  \phi(x)P_\eps^n\psi(x)dx-\int \phi(x)d\mu_\eps \int \psi(x)dx\right|.
\end{equation}
By recurrence, it is easy to obtain the following formula for the iterates of $P_{\eps}$:
\begin{equation}\label{ITPF}
P_{\eps}^n\psi=(1-\eps)^n P^n\psi \ + \eps \overline{\psi}
 \sum_{k=0}^{n-1}(1-\eps)^kP^k{\bf 1}.
\end{equation}
By replacing this expression into (\ref{DDCC}) and  writing there $d\mu_{\eps}= h_{\eps}dx$, with $h_{\eps}$ given by (\ref{DENS}), we get
\begin{eqnarray*}
Cor_m(\phi,\psi,n) & =   &\left| (1-\eps)^n\int \phi(x)P^n\psi(x)dx
-\eps\overline{\psi} \int \phi(x)\sum_{k=n}^{\infty}(1-\eps)^k P^k{\bf 1}(x) dx\right|\\
             & \leq & (1-\eps)^n||\phi||_{L^\infty} ||P^n\psi||_{L^1}
+\eps|\overline{\psi}| ||\phi||_{L^\infty} ||\sum_{k=n}^{\infty}(1-\eps)^k P^k{\bf 1} ||_{L^1}\\
& \leq & 2(1-\eps)^n||\phi||_{L^\infty} ||\psi||_{L^1},
\end{eqnarray*}
by H\"older inequality and the fact that the infinite sum  is of positive  terms. Therefore, if $\psi\in L^1\cap L^\infty$, then the correlations \eqref{COR} are equal to
\[
Cor_m(\phi,\psi h_\eps,n)\leq 2(1-\eps)^n||\phi||_{L^\infty} ||\psi h_\eps||_{L^1}.
\]
\end{proof}
\begin{remark}
We will argue later on that in the EVT the preceding decay of correlations result plays a fundamental role whenever
 $\psi$ is a specific characteristic function: in this case it will be sufficient to have the density $h_{\eps}$ in $L^1.$  For observables $\psi$ which are solely in $L^1$ we will need a density in $L^{\infty}.$
 Note that the exponential decay of the correlations does not depend on the map $f$, which is a remarkable property of RASP.
\end{remark}

We are now ready to introduce  the extreme value statistics; at this regard we define a stochastic process $(Y_n)_{n\in\N}$, $n\in\N $,  by composing a given observable $\phi:X\to\R$ as: $Y_n=\phi\circ f^n_{\underline{\omega}},$ with
\[
f_{\omega_n}(x)= \eta_nf(x)+(1-\eta_n)\xi_n,\quad\forall\,x\in X,
\]
where $\omega_n=(\eta_n,\xi_n)\in\{0,1\}\times X$ is random variable which distribution is the product of the Bernoulli measure with weights $(\eps,1- \eps)$ and the uniform measure on $X$.

 In this case  $(Y_n)_{n\in\N}$ will be a stationary process if we consider the probability $\mathbb{P}=\mu_{\varepsilon}\times \theta_{\varepsilon}^{\mathbb{N}}$, which corresponds to the {\em annealed} situation where we average over the initial condition and over the realization of the noise.

 We will in particular choose the observable $\phi(x)=-\log d(x,z)$, where $d(\cdot,\cdot)$ denotes (some)  distance on $X$ and $z$ a point of $X$. As we anticipated in the Introduction, such an observable is related to recurrence in small sets, since the distribution of the maximum of the random observable $\phi \circ f^k_{\omg}, k=0,\dots, n-1$  up to the level $u_n$, coincides with the distribution of the first entrance of the random orbit into the ball (in the metric $d$), $B(z,e^{-u_n})$ centered at $z$ and of radius $e^{-u_n}$

We now state the two conditions which insure weak dependence of the process and which allow to get the limiting distribution of the maxima.\\

\noindent\textbf{Condition}[$D_2(u_n)$]\label{cond:D2} We say that $D_2(u_n)$ holds for the sequence $Y_0,Y_1,\ldots$ if for all $\ell,t$
and $n$,
\[
|\mathbb{P}\left(Y_0>u_n\cap  \max\{Y_{t},\ldots,Y_{t+\ell-1}\leq u_n\}\right)-\mathbb{P}(Y_0>u_n)
  \p(M_{\ell}\leq u_n)|\leq \gamma(n,t),
\]
where $\gamma(n,t)$ is decreasing in $t$ for each $n$, and
$n\gamma(n,t_n)\to0$ when $n\rightarrow\infty$ for some sequence
$t_n=o(n)$.

Now, let $(k_n)_{n\in\N}$ be a sequence of integers such that
\begin{equation}
\label{eq:kn-sequence-1}
k_n\to\infty\quad \mbox{and}\quad  k_n t_n = o(n).
\end{equation}

\noindent\textbf{Condition}[$D'(u_n)$]\label{cond:D'} We say that $D'(u_n)$
holds for the sequence $Y_0, Y_1, Y_2, \ldots$ if there exists a sequence $(k_n)_{n\in\N}$ satisfying \eqref{eq:kn-sequence-1} and such that
\begin{equation}
\label{eq:D'un}
\lim_{n\rightarrow\infty}\,n\sum_{j=1}^{\lfloor n/k_n \rfloor}\mathbb{P}( Y_0>u_n,Y_j>u_n)=0.
\end{equation}
As we said in the Introduction when these two  conditions
 hold for  $(Y_n)_{n\in\N}$ then there exists an extreme value law for $M_n$ and $H(\tau)=1-e^{-\tau},$ but provided  that the limit (\ref{eq:un}) is true.\\


\section{Stationary measure for injective piecewise diffeomorphisms with RASP}
We now introduce the dynamical system upon which we will perform the randomly applied stochastic perturbation.
\begin{definition} {\bf The map.} \label{S1}
Let us   $X$ be  a compact subset of $\mathbb{R}^D$ which is the closure of its interior $X=\overline {\text{int}(X)}$, equipped with the metric $d$ and with the normalized Lebesgue measure $m$; suppose  $\{X_i\}_{i=1}^N$ is a collection of $N$ disjoint open subsets of $X$ such that $X=\bigcup_{i=1}^N {\overline X_i}$ and $m(\Delta)=0$, where $\Delta:=X\setminus\bigcup_{i=1}^N X_i$ is the {\em singular set}. We will consider non singular maps  $f:X\to X$ which are  injective and such that $f|_{X\setminus\Delta}$ is a $C^1$-diffeomorphism. We  say that  $f$ is continuous in a point $w$ of the boundary of $X$ if there is an open ball of radius $\kappa,$ $B(w, \kappa)$ such that $f$ is continuous on $B(w, \kappa)\cap \text{int}(X)$ and it can be extended continuously on $B(w, \kappa)$.
 \end{definition}
 \begin{remark}
 In the examples we will treat, the set $\Delta$ will be composed by points where $f$ is discontinuous and by the points of the boundary of $X$. We would like to stress that a point of  that boundary is not necessarily a  discontinuity point of $f$.
 \end{remark}

The following proposition give the expression of the stationary density.

\begin{proposition}\label{PROPDENSPID} Let $\{\Lambda_{k}\}_{k\in\N}$ be the sequence of sets defined by
$\Lambda_{0}:=X$ and $\Lambda_{k+1}:=f(\Lambda_k\setminus\Delta)$ for all $k\geq 1$.
Denote $J_k(x):=\prod_{l=1}^{k}|\det(f'(f^{-l}(x))|^{-1}$ for all $k\geq 1$ and $J_0(x):={\bf 1}(x)$. Then,
\begin{equation}\label{DENSPID}
h_\eps(x)= \eps\sum_{k=0}^{\infty}(1-\eps)^k J_k(x){\bf 1}_{\Lambda_k}(x)\quad \forall\, x\in X.
\end{equation}
\end{proposition}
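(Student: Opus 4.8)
The plan is to feed the general RASP formula \eqref{DENS} for the stationary density, $h_\eps=\eps\sum_{k\ge0}(1-\eps)^kP^k\mathbf 1$, into an explicit computation of the iterates $P^k\mathbf 1$ of the Perron--Frobenius operator $P$ of the unperturbed map $f$. The whole proposition then reduces to the identity
\[
P^k\mathbf 1=J_k\,\mathbf 1_{\Lambda_k}\qquad\text{in }L^1,\ \text{for every }k\ge 0,
\]
which I would prove by induction on $k$; substituting it into \eqref{DENS} gives \eqref{DENSPID} at once.

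First I would write down $P$ itself. Since $f$ is injective, $m(\Delta)=0$, and $f|_{X\setminus\Delta}$ is a $C^1$-diffeomorphism onto the open set $\Lambda_1=f(X\setminus\Delta)$, the change of variables $u=f(x)$ in the duality relation $\int\phi(f(x))\psi(x)\,dx=\int\phi(x)P\psi(x)\,dx$ yields, for $\psi\in L^1$,
\[
P\psi(x)=\frac{\psi(f^{-1}(x))}{|\det f'(f^{-1}(x))|}\,\mathbf 1_{\Lambda_1}(x)\qquad\text{for $m$-a.e. }x,
\]
where $f^{-1}$ denotes the inverse of $f|_{X\setminus\Delta}$. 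This settles $k=1$ (and $k=0$ is trivial). For the inductive step I would apply this formula to $P^k\mathbf 1=J_k\mathbf 1_{\Lambda_k}$ and split the computation in two. The Jacobian part is the cocycle relation $J_k(f^{-1}(x))\,|\det f'(f^{-1}(x))|^{-1}=J_{k+1}(x)$, immediate from the definition of $J_k$ and $f^{-l}(f^{-1}(x))=f^{-(l+1)}(x)$. The remaining, and in my view only delicate, point is the set-theoretic identity
\[
\mathbf 1_{\Lambda_1}(x)\,\mathbf 1_{\Lambda_k}(f^{-1}(x))=\mathbf 1_{\Lambda_{k+1}}(x)\qquad\text{for $m$-a.e. }x,
\]
which I would deduce from the injectivity of $f$: when $x\in\Lambda_1$, $f^{-1}(x)$ is the unique preimage of $x$ and lies in $X\setminus\Delta$, so $x\in f(\Lambda_k\setminus\Delta)=\Lambda_{k+1}$ precisely when $f^{-1}(x)\in\Lambda_k$, while any $x\in\Lambda_{k+1}$ is automatically in $\Lambda_1$. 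The bookkeeping around $\Delta$ is harmless: $f^{-1}(x)\notin\Delta$ always, so $\mathbf 1_{\Lambda_k}(f^{-1}(x))=\mathbf 1_{\Lambda_k\setminus\Delta}(f^{-1}(x))$, and since $m(\Delta)=0$ all these equalities hold $m$-a.e., which is enough for equality in $L^1$. Combining the two displays gives $P^{k+1}\mathbf 1=J_{k+1}\mathbf 1_{\Lambda_{k+1}}$, closing the induction.

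Finally I would check consistency: $P$ preserves $\int(\cdot)\,dx$ on nonnegative functions, so $\int P^k\mathbf 1\,dx=m(X)=1$ for every $k$; hence the partial sums of $\eps\sum_k(1-\eps)^kP^k\mathbf 1$ are nonnegative with uniformly bounded $L^1$-norm, the series converges in $L^1$ by monotone convergence, and its limit integrates to $1$, so \eqref{DENSPID} really is a probability density. The main obstacle is thus not any estimate but the careful handling of the singular set $\Delta$ in the recursion $\Lambda_{k+1}=f(\Lambda_k\setminus\Delta)$ when inverting $f$; everything else is a routine change of variables plus induction.
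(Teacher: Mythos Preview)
Your proposal is correct and follows essentially the same route as the paper: compute $P$ explicitly via change of variables, then prove $P^k\mathbf 1=J_k\mathbf 1_{\Lambda_k}$ by induction and substitute into \eqref{DENS}. The only cosmetic difference is in the set-theoretic step: the paper phrases it as the identity $\Lambda_1\cap f(\Lambda_k)=\Lambda_{k+1}$ (decomposing $f(\Lambda_k)=f(\Lambda_k\setminus\Delta)\cup f(\Lambda_k\cap\Delta)$ and using $\Lambda_1\cap f(\Delta)=\emptyset$), whereas you argue directly that $\mathbf 1_{\Lambda_1}(x)\mathbf 1_{\Lambda_k}(f^{-1}(x))=\mathbf 1_{\Lambda_{k+1}}(x)$ from injectivity; these are the same fact.
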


Note that $J_k(x)$ is well defined for every $x\in X$ such that $x\notin f^l(\Delta)$ for all $l\in\{1,\dots,k\}$. Among other basics properties of the sets $\Lambda_k$ which will be helpful all along this paper, the following lemma ensures that $J_k(x)$ (and therefore $h_\eps(x)$) is well defined
for all $x\in\Lambda_k$.

\begin{lemma}\label{LEMLAMBDA} The sequence of sets $\{\Lambda_k\}_{k\in\N}$ has the  following properties:
\begin{enumerate}
\item $\Lambda_k$ is open for all $k\geq 1$
\item $\Lambda_{k+1}\subset\Lambda_{k}$ for all $k\in\N$,
\item $\Lambda_k\cap f^p(\Delta)=\emptyset$, for all $p\in\{1,\dots,k\}$.
\end{enumerate}
\end{lemma}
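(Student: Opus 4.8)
The plan is to verify the three properties in order, each following by an induction on $k$ that exploits the structure of the recursion $\Lambda_{k+1}=f(\Lambda_k\setminus\Delta)$ together with the regularity of $f$ from Definition \ref{S1}. For property (1), openness of $\Lambda_k$ for $k\geq 1$: I would induct on $k$, the base case being $\Lambda_1=f(X\setminus\Delta)=f(\mathrm{int}(X)\setminus\Delta')$ where $\Delta'$ collects the genuine discontinuity points (using $X=\overline{\mathrm{int}(X)}$ and $\Delta\supseteq\partial X$). Since $f$ restricted to $X\setminus\Delta$ is a $C^1$-diffeomorphism onto its image, it is in particular an open map on the open set $X\setminus\Delta$, so $\Lambda_1$ is open. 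For the inductive step, $\Lambda_k\setminus\Delta$ is open (intersection of the open set $\Lambda_k$ with the open set $X\setminus\Delta$), and applying the open map $f|_{X\setminus\Delta}$ gives that $\Lambda_{k+1}=f(\Lambda_k\setminus\Delta)$ is open.

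For property (2), the nesting $\Lambda_{k+1}\subset\Lambda_k$: again induct on $k$. The base case $\Lambda_1\subset\Lambda_0=X$ is immediate since $f$ maps $X$ into $X$. For the step, assume $\Lambda_{k}\subset\Lambda_{k-1}$; then $\Lambda_k\setminus\Delta\subset\Lambda_{k-1}\setminus\Delta$, and applying $f$ monotonically gives $\Lambda_{k+1}=f(\Lambda_k\setminus\Delta)\subset f(\Lambda_{k-1}\setminus\Delta)=\Lambda_k$.

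For property (3), disjointness $\Lambda_k\cap f^p(\Delta)=\emptyset$ for $1\leq p\leq k$: this is the step I expect to be slightly more delicate, since it requires tracking where the singular set lands under iteration. I would again induct on $k$, noting that by property (2) it suffices to handle $p=k$ — indeed, if $\Lambda_k\cap f^k(\Delta)=\emptyset$ for every $k$, then for $p<k$ one has $\Lambda_k\subset\Lambda_p$ and $\Lambda_p\cap f^p(\Delta)=\emptyset$, giving the general claim. So the core is $\Lambda_k\cap f^k(\Delta)=\emptyset$. The key observation is that every point $y\in\Lambda_k$ has a preimage chain $y=f(y_{k-1})$, $y_{k-1}=f(y_{k-2})$, \dots, with each $y_j\in\Lambda_j\setminus\Delta$ and each application being the diffeomorphism $f|_{X\setminus\Delta}$; since $f$ is injective on $X$, the point $y_0$ with $f^k(y_0)=y$ is unique, and it satisfies $y_0\notin\Delta$. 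If instead $y\in f^k(\Delta)$, then $y=f^k(w)$ for some $w\in\Delta$, and by injectivity of $f$ this forces $w=y_0$, contradicting $y_0\notin\Delta$. The main obstacle is being careful that the backward orbit of a point in $\Lambda_k$ genuinely avoids $\Delta$ at every stage — but this is exactly what the definition $\Lambda_{j+1}=f(\Lambda_j\setminus\Delta)$ encodes, so the induction closes cleanly. Once (3) is established, the remark following the proposition — that $J_k(x)$ is well defined on $\Lambda_k$ because the points $f^{-l}(x)$ avoid $\Delta$ where the Jacobian could degenerate or be undefined — follows immediately from the preimage-chain description.
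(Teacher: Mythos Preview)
Your proposal is correct and follows essentially the same approach as the paper: induction on $k$ for all three items, using that $f|_{X\setminus\Delta}$ is an open map for (1), monotonicity of $f$ applied to the nested $\Lambda_k\setminus\Delta$ for (2), and injectivity of $f$ together with the reduction (via (2)) to the case $p=k$ for (3). The only cosmetic difference in (3) is that the paper phrases the contradiction one step at a time (from $\Lambda_{k+1}\cap f^{k+1}(\Delta)$ back to $\Lambda_k\cap f^k(\Delta)$ via injectivity of $f$), whereas you unroll the full backward orbit and invoke injectivity of $f^k$; these are the same argument.
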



\begin{proof} 1) The set $\Lambda_1$ is open since $X\setminus\Delta$ is open and $f$ is a diffeomorphism in $X\setminus\Delta$.
Using the same arguments, we obtain by induction that $\Lambda_k$ is open for any $k\geq 1$. 2) We have $\Lambda_2=f(\Lambda_1\setminus\Delta)\subset f(X\setminus\Delta)=\Lambda_1$ and if we suppose $\Lambda_{k}\subset\Lambda_{k-1}$, then  $\Lambda_{k+1}=f(\Lambda_k\setminus\Delta)\subset f(\Lambda_{k-1}\setminus\Delta)=\Lambda_k$. 3) By injectivity we have $f(X\setminus\Delta)\cap f(\Delta)=\emptyset$, i.e.
$\Lambda_1\cap f(\Delta)=\emptyset$. Suppose now that $\Lambda_k\cap f^k(\Delta)=\emptyset$ and that there exists $x\in\Lambda_{k+1}\cap f^{k+1}(\Delta)$. Then there exist $y\in\Lambda_k\setminus\Delta$ and $y'\in f^{k}(\Delta)$ such that $f(y)=f(y')=x$. As $f$ is injective, we must have $y=y'$, which contradicts the recurrence hypothesis. It follows that $\Lambda_k\cap f^k(\Delta)=\emptyset$ for all $k\geq 1$ and applying 1) we obtain the desired result.
\end{proof}
\begin{proof}(Proposition \ref{PROPDENSPID}).
We compute  the transfer operator $P$ of $f$ in order to use the formula \eqref{DENS}. Since $f$ is a diffeomorphism on the full Lebesgue measure open set $X\setminus\Delta$, its associated transfer operator acts on a function $\psi\in L^1(X)$ as
\[
P\psi(x)={\bf 1}_{f(X\setminus\Delta)}(x)\psi(f^{-1}(x))|det(f'(f^{-1}(x))|^{-1}={\bf 1}_{\Lambda_1}(x)\psi(f^{-1}(x))J_1(x)\quad\forall\, x\in X.
\]
Since the set $f(\Delta)$ has zero Lebesgue measure, we can suppose without loss of generality that $P\psi(x)=0$ if $x\in f(\Delta)$.
Then, $P\psi(x)$ is well defined for all $x\in X$, since $f(X\setminus\Delta)=\Lambda_1$ does not contain point of $f(\Delta)$. Now, we show by induction that for the uniform density ${\bf 1}={\bf 1}_{X}$ we have
\[
P^k{\bf 1}(x)={\bf 1}_{\Lambda_k}(x)J_k(x) \qquad\forall k\geq 1.
\]
It is easy to check that $P{\bf 1}(x)={\bf 1}_{\Lambda_1}(x)J_1(x)$ and that
\[
P^{k+1}{\bf 1}(x)={\bf 1}_{\Lambda_1}(x){\bf 1}_{f(\Lambda_k)}(x)J_{k+1}(x),
\]
if we suppose the property true at rank $k$. Now, we have
\[
\Lambda_1\cap f(\Lambda_k)=(\Lambda_1\cap \Lambda_{k+1})\cup(\Lambda_1\cap f(\Lambda_k\cap\Delta))=\Lambda_{k+1}\cup(\Lambda_1\cap f(\Lambda_k\cap\Delta)).
\]
Since $\Lambda_1\cap f(\Lambda_k\cap\Delta)\subset \Lambda_1\cap f(\Delta)=\emptyset$, we have $\Lambda_1\cap f(\Lambda_k)=\Lambda_{k+1}$. It follows that $P^{k+1}{\bf 1}(x)={\bf 1}_{\Lambda_{k+1}}(x)J_{k+1}(x)$, which ends the induction. Using \eqref{DENS} we obtain that
\[
h_\eps(x)= \eps\sum_{k=0}^{\infty}(1-\eps)^k J_k(x){\bf 1}_{\Lambda_k}(x)\quad \forall\, x\in X,
\]
which ends the proof of the proposition.
\end{proof}
Now we decompose the space $X$ as the disjoint union $X=\Lambda_1\cup f(\Delta)\cup (X\setminus f(X))$ and we use \eqref{DENSPID} to study the density in each of these sets. Since the  last two sets do not belong to any $\Lambda_k$ with $k\neq 0$, we obtain that $h_\eps=\eps$ on these sets. For $x\in\Lambda_1$, by property 2) of Lemma \ref{LEMLAMBDA}, two situations can occur: either there exists $p\geq 1$ such
that $x\in\Lambda_p\setminus\Lambda_{p+1}$ or $x\in \Lambda:=\bigcap_{k=1}^\infty\Lambda_k$. So in $\Lambda_1$ we have on one hand
\begin{equation}\label{DENSPC}
h_\eps(x)=\eps\sum_{k=0}^{p}(1-\eps)^k J_k(x)\qquad \forall\, x\in \Lambda_{p}\setminus\Lambda_{p+1},
\end{equation}
and the by continuity of the $J_k$, the density is continuous. On the other hand we have
\begin{equation}\label{DENSLAMB}
h_\eps(x)=\eps\sum_{k=0}^{\infty}(1-\eps)^k J_k(x)\qquad\forall\,x\in\Lambda.
\end{equation}
In this last case the density may not be bounded, depending on the behavior of $J_k(x)$. However, if we
suppose that there exists $0<\lambda<(1-\eps)^{-1}$ such that $|det(f'(x)|^{-1}\leq \lambda$ for all $x\in X\setminus\Delta$, then the series \eqref{DENSLAMB} converges uniformly in $\Lambda$ and $h_\eps$ is also continuous in $\Lambda$ and thus on the whole set $\Lambda_1$ (see Proposition \ref{FIN}).

In the next sections we decompose our study of the existence of EVL in two parts. The first part concerns the point of $\Lambda_1\setminus\tilde{\Lambda}$, where
\begin{equation}\label{A}
\tilde{\Lambda}:=\bigcap_{k\in\N}\overline{\Lambda}_k.
\end{equation}
The set $\tilde{\Lambda}$ is called the {\em global attractor} of $f$ and contains all the point of
$\Lambda$. In $\Lambda_1\setminus\tilde{\Lambda}$ we can verify the conditions $D_2(u_n)$ and $D'(u_n)$ to show the existence of EVL, but in $\tilde{\Lambda}$ condition $D'(u_n)$ is not verified.  For this reason in the second part, we introduce other conditions which allow us to show the existence of EVL but with an extremal index different from $1;$ this is achieved by giving  additional property, namely by choosing $f$ as a  piecewise contracting map. It has been shown in \cite{CGMU} that for any piecewise contracting map defined in a compact space, if the global attractor $\tilde{\Lambda}$ does not intersect the set of the discontinuities, then it is composed of a finite number of periodic orbits. Moreover, this condition, for injective maps as our, is generic in $C^0$ topology \cite{CB11}. All these properties will help  to get the existence of EVL in the set $\tilde{\Lambda}$. Let us give a few example to illustrate that
\begin{example}\label{EXBAKER}
 As an easy example of an uncountable hyperbolic attractor, let us consider the baker's map $f$ defined on the unit square $[0,1]\times[0,1]$ iteratively for $n\ge 1$, by

\[
  x_{n+1}= \left \{
   \begin{array}{r c l}
     \gamma_a x_n, \ \ \text{for} \ y_n<\alpha \\
      \frac12+\gamma_bx_n, \ \ \text{for} \ y_n>\alpha\\

   \end{array}
 \right.
\]
\[
 y_{n+1}= \left \{
   \begin{array}{r c l}
     \frac{1}{\alpha} y_n, \ \ \text{for} \ y_n<\alpha \\
      \frac{1}{1-\alpha}(y_n-\alpha)+\gamma_bx_n, \ \ \text{for} \ y_n>\alpha\\

   \end{array}
   \right.
\]
with $0\le x_n,y_n\le 1$, $0<\gamma_a<\gamma_b<\frac12$, $\alpha\le \frac12.$ In this case $\Delta$ is given by the union of the boundary of the square and the segment $y=\alpha$. Moreover $\overline{\Lambda}_1$ is given by the two rectangles $[0, \gamma_a]\times[0,1]$, $[\frac12, \frac12+\gamma_b]\times[0,1].$ Notice that the derivative $f'$ is a diagonal matrix with diagonal elements $\gamma_a$ (or $\gamma_b$), and $\frac{1}{\alpha}$ (or $\frac{1}{1-\alpha}$). We can choose these parameters in order to get $\det(f')<1.$

\end{example}

 Typical examples of PCM for which the attractor is generically a finite set of periodic orbits are piecewise affine \cite{BD09,LU06}. That is, if for each $i\in\{1,\dots,N\}$ the restriction $f_i:=f|_{X_i}$ of the map $f:X\to X$ of Definition \ref{S1} to a piece $X_i$ has the following form:
\[
f_i(x)=A_ix+c_i\qquad\forall\, x\in X_i,
\]
where $A_i:\R^D\to\R^D$ is a linear contraction and $c_i\in X$.

\begin{example} {\bf 1-D case} The simplest example of a piecewise contracting map of this class is given in dimension $1$ in the interval $[0,1]$ by
$f(x)=ax+c$ with $a, c\in(0,1)$ (here $X_1=(0,(1-c)/a)$, $X_2=((1-c)/a,1)$ and $\Delta=\{0, 1, (1-c)/a\}$). For almost all the values of the parameters $a$ and $c$, the global attractor $\widetilde{\Lambda}$ is composed of a unique periodic orbit (which period depends on the specific values of the parameters), but for the remaining set of parameters the global attractor is a Cantor set supporting a minimal dynamics. We refer to \cite{C} for a detailed description of the asymptotic dynamics of this map. We remind that in our paper \cite{FFGV} we investigated in detail the case with $c=0$ and $0$ the (unique) fixed point; we  briefly quote here the simple structure of the density
$$
h_\eps(x)= \eps\sum_{k=0}^{p-1}\frac{(1-\eps)^k}{a^k}\quad \forall\, x\in (a^p, a^{p-1}], \ p\ge 1
$$
 and the formulae for the coefficients of the linear scaling. At this regard
 let $\tau>0$, $y=-\ln(\tau)$ and $u_n=\frac{y}{a_n}+b_n$ with
\[
a_n=1\qand \ b_n= \log\left(2n\eps\sum_{k=0}^{p-1}\frac{(1-\eps)^k}{\alpha^k}\right)\qquad\forall\,n\in\N,
\]
If:\\
(i) $z\neq 0$ on the interval but not in the countably many discontinuity points of $h_\eps$, namely $z\notin \cup_{j\in \mathbb{N}}\{a^j\}$;\\
(ii) $p\geq 1$ such that $z\in (a^p, \alpha^{p-1})$ and $n$  is large enough such that the ball $B(z,e^{-u_n})\subset(a^p, a^{p-1})$, then:
$$
n\mathbb{P}(X_0>u_n) =\tau
 $$
 \end{example}
 \begin{example}{\bf 2-D case}
 In the unit square $X=[0,1]^2$, another interesting example is given by a map with four pieces $X_1:=(0,T_1)\times(0,T_2)$, $X_2:=(T_1,1)\times(0,T_2)$, $X_3:=(0,T_1)\times(T_2,1)$ and $X_4:=(T_1,1)\times(T_2,1)$, singular set $\Delta$ given by the two segments $\{x=T_1\}, \{y=T_2\}$ and the boundary of the unit square,  and restricted maps $f_1(x,y):=a(x,y)+(1-a)(1,0)$, $f_2(x,y):=a(x,y)+(1-a)(1,1)$, $f_3(x,y):=a(x,y)$ and $f_4(x,y):=a(x,y)+(1-a)(0,1)$. Once again, for almost all values of the parameter $a\in[0,1)$, $T_1$ and $T_2$, the global attractor is composed of a finite number of periodic orbits. However, contrarily to the 1 dimensional case, where the number of periodic orbits is bounded by the number of pieces $N$ \cite{NPR}, the number of periodic orbits increases with the parameter $a$ and tends to infinity when $a$ goes to $1$. This example belongs to a larger class of higher dimension piecewise affine contracting maps which are models for genetic regulatory networks and are studied in \cite{CFLM06}.
\end{example}

{\bf Question} An interesting question is the check if the stationary measure is stochastically stable, in the sense that the measure $\mu_{\eps}$ will converge weakly to the SRB measure supported on the global attractor. Let us remind that it is {\em not} the case for the simple contraction $f(x)=ax$, $x\in [0,1]$, $a<1$, as it was proved in Lasota Mackay \cite{LM}.


\section{Convergence to the  extreme value law}
We prove in this section the convergence towards an extreme value distribution in the complement of the attractor defined in (\ref{A}). We will work with the process $Y_n(x,\omg) :=-\log d(f_{\omg}^n(x), z)$. Our first task will be to prove the existence of the limit (\ref{eq:un}); this will be accomplished in two manners: we will first give a general result showing the existence of that limit for {\em any} point of $X$, but without an explicit expression for the scaling coefficients $a_n$ and $b_n$. Such  coefficients could instead be computed on the set $\Lambda_1\setminus\tilde{\Lambda}$ and even on the attractor itself provided the density of the stationary measure will be essentially bounded. The first general result uses Theorem 1.7.13 in the book by Leadbetter, Lindgren and Rootzen \cite{LLR}: according to it, a sufficient condition to guarantee the existence of the limit  (\ref{eq:un}) for $0<\tau<\infty,$ is that $\frac{1-F(x)}{1-F(x-)}\rightarrow 1$, as $x\rightarrow x_{F},$ where $F$ is the distribution function of $Y_0$, the term $F(x-)$ in the denominator denotes the left limit of $F$ at $x$ and finally $x_F:=\sup\{x; F(x)<1\}.$ For our particular observable $Y_0$, $x_F=\infty$ and the probability $\mathbb{P}$ reduces to the stationary measure $\mu_{\eps}$ which is absolutely continuous and therefore non atomic. Therefore $F$ is continuous and the above ratio goes to $1.$ We have thus proved the following proposition:
\begin{proposition}\label{PT}
Suppose $f:X\to X$ satisfies Definition \ref{S1}. Then, for any $z\in X$ defining the observable $Y_0(\cdot)=-\log d(\cdot, z)$ and any $0<\tau<\infty$, there exists a sequence $u_n$ such that
$$
\lim_{n\to\infty}n\mathbb{P}(X_0>u_n) = \tau.
$$
\end{proposition}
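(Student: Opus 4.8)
The plan is to invoke the classical Leadbetter--Lindgren--Rootz\'en criterion (Theorem 1.7.13 in \cite{LLR}) for the existence of a normalizing sequence $(u_n)_{n\in\N}$ satisfying \eqref{eq:un} with any prescribed $0<\tau<\infty$. That theorem says that if $F$ is the distribution function of $Y_0$, with right endpoint $x_F:=\sup\{x:F(x)<1\}$, then a sequence $u_n$ with $n(1-F(u_n))\to\tau$ exists provided that
\[
\lim_{x\to x_F}\frac{1-F(x)}{1-F(x-)}=1 .
\]
So the whole argument reduces to checking this ratio condition for the specific observable $Y_0(\cdot)=-\log d(\cdot,z)$.

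First I would identify the distribution of $Y_0$ under $\mathbb{P}=\mu_\eps\times\theta_\eps^{\N}$. Since $Y_0=\phi\circ f^0_{\omg}=\phi$ depends only on the initial point and not on the noise realization, $\mathbb{P}$ effectively collapses to the stationary measure $\mu_\eps$ on $X$; that is, $1-F(u)=\mathbb{P}(Y_0>u)=\mu_\eps\big(B(z,e^{-u})\big)$. Next I would record two facts: (a) $x_F=+\infty$, because for every threshold $u$ the ball $B(z,e^{-u})$ has positive radius and hence positive Lebesgue measure (as $X=\overline{\text{int}(X)}$), so $\mu_\eps\big(B(z,e^{-u})\big)>0$ and $F(u)<1$ for all $u$; (b) $F$ is continuous, because $\mu_\eps=h_\eps\,m$ is absolutely continuous with respect to Lebesgue measure, hence non-atomic, so $\mu_\eps(\{Y_0=u\})=\mu_\eps(\partial B(z,e^{-u}))=0$ for every $u$. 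Continuity of $F$ gives $F(x-)=F(x)$ for all $x$, so the ratio $\frac{1-F(x)}{1-F(x-)}$ equals $1$ identically and trivially tends to $1$ as $x\to x_F=\infty$.

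Applying Theorem 1.7.13 of \cite{LLR} then yields, for each $0<\tau<\infty$, a sequence $(u_n)_{n\in\N}$ with $n\,\mathbb{P}(X_0>u_n)\to\tau$, which is exactly the statement of Proposition \ref{PT}. I expect no genuine obstacle here: the only points requiring care are the verification that the singular set $\Delta$ (in particular the boundary of $X$) does not spoil positivity of $\mu_\eps$ on balls around an arbitrary $z\in X$ --- handled by $m(\Delta)=0$ together with $h_\eps\geq\eps>0$ on $X\setminus\tilde\Lambda$ and, more simply, by \eqref{DENS} giving $h_\eps\geq\eps\,{\bf 1}$ so that $\mu_\eps(A)\geq\eps\,m(A)>0$ for any set $A$ of positive Lebesgue measure --- and the non-atomicity of $\mu_\eps$, which is immediate from absolute continuity. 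Note that this proposition deliberately gives only existence of $(u_n)$; the explicit affine form $u_n=y/a_n+b_n$ with computable $a_n,b_n$ will be extracted later (Propositions \ref{ANBN} and \ref{FIN}) under the extra hypothesis that $h_\eps$ is essentially bounded near $z$.
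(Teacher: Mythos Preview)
Your proposal is correct and follows essentially the same approach as the paper: both invoke Theorem~1.7.13 of \cite{LLR}, observe that $\mathbb{P}$ reduces to the absolutely continuous (hence non-atomic) stationary measure $\mu_\eps$, and deduce that $F$ is continuous so the ratio $(1-F(x))/(1-F(x-))$ is identically~$1$. Your write-up is in fact more careful than the paper's, since you explicitly justify $x_F=+\infty$ via the lower bound $h_\eps\geq\eps\,{\bf 1}$ from \eqref{DENS}, which the paper simply asserts.
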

As we anticipated above the previous result could be strengthened  whenever $z$ lies in the complement of the global attractor under the general assumption that $h_{\eps}$ is summable. Actually, we will suppose that
$z\in(\Lambda_0\setminus\tilde{\Lambda})\setminus\cup_{k\geq 2}^{\infty}\partial\Lambda_k$, where $\partial\Lambda_k:=\overline{\Lambda}_k\setminus\Lambda_k$; this in turn implies that $z\in\Lambda_p\setminus\overline{\Lambda}_{p+1}\subset \Lambda_p\setminus{\Lambda}_{p+1}$ for some $p\geq 1$.\footnote{We notice that in the case of the baker's transformation of Example \ref{EXBAKER} the boundaries of the $\Lambda_k$ belong to the attractor and therefore we can take simply $z\in (\Lambda_0/\tilde{\Lambda}).$}
We can take advantage of the continuity of the density in such points to obtain a possible scaling sequence
for which the limit (\ref{eq:un}) holds.

\begin{proposition}\label{ANBN} Suppose $f:X\to X$ satisfies Definition \ref{S1}. Let $p\geq 1$ such that $z\in\Lambda_p\setminus\overline{\Lambda}_{p+1}$. Let $\tau>0$, $y=-\ln(\tau)$ and $u_n=\frac{y}{a_n}+b_n$ with $a_n=D$ and $b_n$ such that
\[
m(B(z,e^{-b_n}))= \left(n\eps\sum_{k=0}^{p}(1-\eps)^k J_k(z)\right)^{-1}=\frac{1}{nh_\eps(z)}\qquad\forall\,n\in\N,
\]
where $B(z,r)$ denotes a ball of center $z$ and radius $r$. Then,
\begin{equation}\label{NPX}
\lim_{n\to\infty}n\mathbb{P}(X_0>u_n) = \tau.
\end{equation}
\end{proposition}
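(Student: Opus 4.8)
The plan is to compute $n\mathbb{P}(X_0>u_n)$ directly, exploiting that $\mathbb{P}$ reduces to the stationary measure $\mu_\eps$ on the zeroth coordinate, so that
\[
n\mathbb{P}(X_0>u_n)=n\,\mu_\eps\bigl(\{x: -\log d(x,z)>u_n\}\bigr)=n\,\mu_\eps\bigl(B(z,e^{-u_n})\bigr)=n\int_{B(z,e^{-u_n})}h_\eps(x)\,dx.
\]
Since $z\in\Lambda_p\setminus\overline{\Lambda}_{p+1}$ and $\Lambda_{p+1}$ is open (Lemma \ref{LEMLAMBDA}), the complement $X\setminus\overline{\Lambda}_{p+1}$ is open and contains $z$; likewise $z\in\Lambda_p$ with $\Lambda_p$ open. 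Hence there is a radius $\rho>0$ such that $B(z,\rho)\subset\Lambda_p\setminus\overline{\Lambda}_{p+1}\subset\Lambda_p\setminus\Lambda_{p+1}$. For $n$ large enough that $e^{-u_n}<\rho$ — note $u_n\to\infty$ because $b_n\to\infty$, as $m(B(z,e^{-b_n}))=(nh_\eps(z))^{-1}\to0$ — the whole ball $B(z,e^{-u_n})$ lies in $\Lambda_p\setminus\Lambda_{p+1}$, where by \eqref{DENSPC} the density equals the finite sum
\[
h_\eps(x)=\eps\sum_{k=0}^{p}(1-\eps)^k J_k(x),
\]
which is continuous on $B(z,\rho)$ since each $J_k$ is continuous there (the ball avoids all the sets $f^l(\Delta)$, $1\le l\le p$, by property 3 of Lemma \ref{LEMLAMBDA}).

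Next I would write
\[
n\int_{B(z,e^{-u_n})}h_\eps(x)\,dx = n\,h_\eps(z)\,m\bigl(B(z,e^{-u_n})\bigr) + n\int_{B(z,e^{-u_n})}\bigl(h_\eps(x)-h_\eps(z)\bigr)\,dx.
\]
For the main term, observe that $a_n=D$ and $u_n=y/D+b_n$ give $e^{-u_n}=e^{-y/D}e^{-b_n}$, so, using that in $\mathbb{R}^D$ the Lebesgue measure of a ball scales as the $D$-th power of its radius, $m(B(z,e^{-u_n}))=e^{-y}\,m(B(z,e^{-b_n}))$. By the defining relation for $b_n$, $m(B(z,e^{-b_n}))=(nh_\eps(z))^{-1}$, hence $n\,h_\eps(z)\,m(B(z,e^{-u_n}))=e^{-y}=\tau$, exactly. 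For the error term, by continuity of $h_\eps$ at $z$, given $\delta>0$ there is $r_\delta\in(0,\rho)$ with $|h_\eps(x)-h_\eps(z)|<\delta$ on $B(z,r_\delta)$; for $n$ large enough that $e^{-u_n}<r_\delta$ the error is bounded in absolute value by $\delta\,n\,m(B(z,e^{-u_n}))=\delta\,\tau/h_\eps(z)$, which is $O(\delta)$. Letting $n\to\infty$ and then $\delta\to0$ yields $n\mathbb{P}(X_0>u_n)\to\tau$.

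The only genuinely delicate point is the localization step: one must be sure that $z$ has a neighborhood contained in a single ``annulus'' $\Lambda_p\setminus\Lambda_{p+1}$ on which $h_\eps$ is given by the closed formula \eqref{DENSPC} and is continuous. This is precisely why the hypothesis excludes the boundaries $\partial\Lambda_k$ for $k\ge2$: without it $z$ could be a limit of points from infinitely many different annuli, or lie on a jump of $h_\eps$, and neither the constancy of the leading coefficient nor continuity would be available. Everything else — the exact cancellation in the main term from the choice of $b_n$, and the vanishing of the error from continuity — is routine. A minor bookkeeping remark: strictly speaking $n$ need not be an integer for $e^{-b_n}$ to be defined, but as usual $b_n$ is chosen so that $m(B(z,e^{-b_n}))(nh_\eps(z))=1$ for each $n\in\N$, making the identity $n\,h_\eps(z)\,m(B(z,e^{-u_n}))=\tau$ hold on the nose rather than asymptotically.
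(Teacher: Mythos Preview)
Your proof is correct and follows essentially the same route as the paper's: both localize the ball $B(z,e^{-u_n})$ inside the open set $\Lambda_p\setminus\overline{\Lambda}_{p+1}$ for large $n$, invoke the finite-sum formula \eqref{DENSPC} for $h_\eps$ there, use the scaling $m(B(z,e^{-u_n}))=e^{-y}m(B(z,e^{-b_n}))=\tau/(n h_\eps(z))$, and finish by continuity of the $J_k$ at $z$. The only cosmetic difference is that the paper sandwiches $n\mathbb{P}(X_0>u_n)$ between expressions involving $\inf$ and $\sup$ of $J_k$ over the shrinking ball, whereas you split into a main term (which equals $\tau$ exactly once the balls lie in $\mathrm{int}(X)$) plus a continuity-controlled error; these are two standard phrasings of the same argument.
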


\begin{proof} For all $n\in\N$, we have
\[
n\mathbb{P}(X_0>u_n) =  n\int{\bf 1}_{B(z,e^{-u_n})}(x)d\mu_\epsilon=n\int{\bf 1}_{B(z,e^{-u_n})}(x)
h_\eps(x)dx.
\]
Since $(u_n)_{n\in\N}$ is an increasing sequence for any $\tau>0$ and $z$ is in the open set $\Lambda_{p}\setminus\overline{\Lambda}_{p+1}$, if $n$ is large enough $B(z,e^{-u_n})\subset\Lambda_{p}\setminus\Lambda_{p+1}$. Using formula \eqref{DENSPC} and $m(B(z,e^{-u_n}))=\tau m(B(z,e^{-b_n}))$ we obtain:
\begin{equation}
n\tau m(B(z,e^{-b_n}))\eps\sum_{k=0}^{p}(1-\eps)^k\inf_{x\in B(z,e^{-u_n})}J_k(x)\leq
n\mathbb{P}(X_0>u_n)
\leq n\tau m(B(z,e^{-b_n}))\eps\sum_{k=0}^{p}(1-\eps)^k\sup_{x\in B(z,e^{-u_n})}J_k(x).
\end{equation}
Since $J_k$ is continuous in $B(z,e^{-u_n})$ for all $k\in\{0,\dots,p\}$, we have
\[
\lim_{n\to\infty}\inf_{x\in B(z,e^{-u_n})}J_k(x)=J_k(z)=\lim_{n\to\infty}\sup_{x\in B(z,e^{-u_n})}J_k(x).
\]
Using the expression of the sequence $(u_n)_{n\in\N}$, we obtain
\[
\lim_{n\to\infty}n\mathbb{P}(X_0>u_n) =\tau.
\]
\end{proof}

We note that an inspection of the proof of Proposition \ref{ANBN} easily shows that similar results can be obtained for the points of $\Lambda$, but supposing a continuous bounded density. The following proposition shows that the density has such properties in
$\Lambda$ under some condition on the derivative of $f$.

\begin{proposition}\label{FIN}
Let $0<\lambda<(1-\eps)^{-1}$ and suppose that $|\det(f'(x)|^{-1}\le \lambda$, for all $x\in X/\Delta,$ then $h_{\eps}$ is continuous in $\Lambda$ and the same conclusions of Proposition \ref{ANBN} hold.
\end{proposition}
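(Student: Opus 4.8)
The plan is, first, to turn the pointwise identity \eqref{DENSLAMB} on $\Lambda$ into genuine continuity of $h_\eps$ at each point of $\Lambda$, by exhibiting $h_\eps$ locally near such a point as a uniform limit of continuous functions; and second, to rerun the argument of Proposition \ref{ANBN} with this continuity (in place of the continuity of the finite sum \eqref{DENSPC} on a neighbourhood of $z$) in order to obtain \eqref{NPX} for $z\in\Lambda$, now with $a_n=D$ and $b_n$ determined by $m(B(z,e^{-b_n}))=(n\,h_\eps(z))^{-1}$, where $h_\eps(z)=\eps\sum_{k\ge0}(1-\eps)^kJ_k(z)$.

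For the continuity, I would first note that, by Lemma \ref{LEMLAMBDA}(2)--(3) and injectivity of $f$, one has $f^{-l}(\Lambda_k)\subset\Lambda_{k-l}\setminus\Delta\subset X\setminus\Delta$ for $0\le l\le k$; since $f|_{X\setminus\Delta}$ is a $C^1$-diffeomorphism, each map $x\mapsto|\det f'(f^{-l}(x))|^{-1}$ is then well defined and continuous on the open set $\Lambda_k$, and by hypothesis bounded there by $\lambda$, so $J_k$ is continuous on $\Lambda_k$ with $0\le J_k\le\lambda^k$. Now fix $z\in\Lambda=\bigcap_k\Lambda_k$. Since every $\Lambda_m$ is open, for each $m\in\N$ there is an open ball $U_m\ni z$ with $U_m\subset\Lambda_m$; on $U_m$ all indicators ${\bf 1}_{\Lambda_k}$ with $k<m$ equal $1$, so by \eqref{DENSPID} the difference between $h_\eps$ and the finite sum $S_m:=\eps\sum_{k=0}^{m-1}(1-\eps)^kJ_k$ is bounded, uniformly on $U_m$, by $\eps\sum_{k\ge m}((1-\eps)\lambda)^k=\eps\,((1-\eps)\lambda)^m/(1-(1-\eps)\lambda)$, which tends to $0$ as $m\to\infty$ precisely because $(1-\eps)\lambda<1$. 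Each $S_m$ being continuous on $U_m$, a $\delta/3$ argument (pick $m$ so the above tail is $<\delta/3$, then shrink $U_m$ so that $S_m$ varies by $<\delta/3$) shows $h_\eps$ is continuous at $z$; in particular $\eps\le h_\eps(z)\le\eps/(1-(1-\eps)\lambda)$, so $0<h_\eps(z)<\infty$ and $b_n$ is well defined with $b_n\to\infty$.

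For the second step, fix $z\in\Lambda$, $\tau>0$, $y=-\ln\tau$, $a_n=D$ and $b_n$ as above. Exactly as in the proof of Proposition \ref{ANBN}, $e^{-u_n}=\tau^{1/D}e^{-b_n}$ gives $m(B(z,e^{-u_n}))=\tau\,m(B(z,e^{-b_n}))$ for $n$ large, hence $n\,m(B(z,e^{-u_n}))=\tau/h_\eps(z)$ and
\[
n\mathbb{P}(X_0>u_n)=n\int_{B(z,e^{-u_n})}h_\eps\,dx=\big(n\,m(B(z,e^{-u_n}))\big)\,\frac{1}{m(B(z,e^{-u_n}))}\int_{B(z,e^{-u_n})}h_\eps\,dx=\frac{\tau}{h_\eps(z)}\,\frac{1}{m(B(z,e^{-u_n}))}\int_{B(z,e^{-u_n})}h_\eps\,dx.
\]
Bounding the last average between $\inf_{B(z,e^{-u_n})\cap X}h_\eps$ and $\sup_{B(z,e^{-u_n})\cap X}h_\eps$, both of which converge to $h_\eps(z)$ by the continuity of $h_\eps$ at $z$ and $e^{-u_n}\to0$, one gets $n\mathbb{P}(X_0>u_n)\to\tau$, which is \eqref{NPX}.

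The step I expect to be the main obstacle is the first one: since $\Lambda$ is only an intersection of open sets and need not itself be open, one cannot work on a ball contained in $\Lambda$ and must instead localise inside $\Lambda_m$ for $m$ large enough that the non-continuous tail $\eps\sum_{k\ge m}(1-\eps)^kJ_k{\bf 1}_{\Lambda_k}$ becomes uniformly negligible near $z$; the rest — the termwise bound $J_k\le\lambda^k$, the Weierstrass-type estimate, and the averaging argument of the second step — is routine given $(1-\eps)\lambda<1$ and the facts already established in Lemma \ref{LEMLAMBDA}, Proposition \ref{PROPDENSPID} and Proposition \ref{ANBN}.
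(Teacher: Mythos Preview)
Your proposal is correct and follows essentially the same approach as the paper: both use the bound $J_k\le\lambda^k$ to get a geometric majorant $\eps((1-\eps)\lambda)^k$, then uniform convergence to pass continuity from the partial sums to $h_\eps$ at points of $\Lambda$. The paper's version is a little more compressed --- it applies the Weierstrass $M$-test globally on $X$ and then swaps $\lim_{x\to x_0}$ with the sum termwise (noting each $g_k=\eps(1-\eps)^kJ_k{\bf 1}_{\Lambda_k}$ is continuous at $x_0\in\Lambda$ since $\Lambda_k$ is open) --- whereas you localise to $U_m\subset\Lambda_m$ and run an explicit $\delta/3$ argument; but the underlying idea is identical. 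Your explicit rerun of the averaging argument for \eqref{NPX} is also correct and simply spells out what the paper leaves implicit in ``the same conclusions of Proposition \ref{ANBN} hold''.
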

\begin{proof}
 For any $k\ge 0$ and $x\in X$ let $g_k(x) :=\eps(1-\eps)^k J_k(x){\bf 1}_{\Lambda_k}(x)$. Then, $g_k(x) = 0$ if $x\neq\Lambda_k$ and
$g_k(x)\le \eps((1-\eps)\lambda)^k$  if $x\in \Lambda_k$ and therefore
$\sum_{k=0}^p g_k(x)$ converges uniformly on $X$ when $p\rightarrow \infty$, since
$ \sum_{k=0}^{\infty}((1-\eps)\lambda)^k$ converges. Now, let $x_0\in \Lambda$,
then
$$
\lim_{x\rightarrow x_0}h_{\eps}(x)=\lim_{x\rightarrow x_0}\sum_{k=0}^{\infty}g_k(x)=\sum_{k=0}^{\infty}\lim_{x\rightarrow x_0}g_k(x)=\eps\sum_{k=0}^{\infty}(1-\eps)^k\lim_{x\rightarrow x_0}J_k(x)=\eps\sum_{k=0}^{\infty}(1-\eps)^k J_k(x_0)=h_{\eps}(x_0)
$$
We have used that $x_0\in \Lambda_k$ for all $k\ge 0$, which implies on one hand by openness of the $\Lambda_k$ that
$lim_{x\rightarrow x_0} {\bf 1}_{\Lambda_k} (x) = 1$ and in the other hand that $J_k$ is continuous in $x_0$, since for any $x\in \Lambda_k$ we have
$f^l(x)\notin f(\Delta)$  for all $l\le k$ and $f$ is $C^1$ in $X\setminus\Delta.$
\end{proof}
We are now ready to check conditions $D_2(u_n)$ and $D'(u_n)$ which will guarantee the convergence of our process toward an extreme value law, the Gumbel's one in our case. As we said above condition $D_2(u_n)$ is insured by the decay of correlations result established in Proposition \ref{Prop:Correlations}, see the proof of Th. D in Sect. 4.1 in \cite{AFV} for the very general approach. We are thus left with the proof of $D'(u_n)$. We first need:
\begin{lemma} Let $n\in\N$ and $U_n=B(z, e^{-u_n})$ be a  ball centered at a point $z\in X$ with radius $e^{-u_n}$. Let $p\geq 1$ be such that $z\in \Lambda_{p}\setminus\overline{\Lambda}_{p+1}$ then,
\begin{equation}\label{RETURN}
\mathbb{P}((x,\omega_1,\dots,\omega_j): x\in U_n, f^j_{\omg}(x)\in U_n)=\epsilon \mu_\epsilon(U_n)\sum_{k=0}^{\min\{j-1,p\}}(1-\epsilon)^k\int J_k(x){\bf 1}_{U_n}(x) dx \qquad\forall j\geq 1,
\end{equation}
for $n$ large enough. In particular, under the same hypothesis we have,
\begin{equation}\label{RETURNMAJ}
\mathbb{P}((x,\omega_1,\dots,\omega_j): x\in U_n, f^j_{\omg}(x)\in U_n)\leq\mu_\epsilon(U_n)^2 \qquad\forall j\geq 1.
\end{equation}
\end{lemma}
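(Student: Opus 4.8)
The plan is to condition on the last \emph{reset time} of the random orbit, the structural feature that makes RASP tractable. Since $\mathbb{P}=\mu_\eps\times\theta_\eps^\N$ and the event $\{x\in U_n,\ f^j_\omg(x)\in U_n\}$ depends only on $(x,\omega_1,\dots,\omega_j)$, I would partition the noise space into the events $A_i$ (for $1\le i\le j$) on which $\eta_i=0$ and $\eta_{i+1}=\dots=\eta_j=1$, together with the event $A_0$ on which $\eta_1=\dots=\eta_j=1$; these have $\theta_\eps^j$-probabilities $\eps(1-\eps)^{j-i}$ and $(1-\eps)^j$ respectively. On $A_i$ with $i\ge1$ one has $f^j_\omg(x)=f^{j-i}(\xi_i)$, which does not depend on $x$, while on $A_0$ one has $f^j_\omg(x)=f^j(x)$.

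Next I would integrate term by term, using the mutual independence of $x\sim\mu_\eps$, of each uniform variable $\xi_i$ (distributed as $m$), and of the Bernoulli variables $\eta_l$. The contribution of $A_i$ with $i\ge1$ is then $\eps(1-\eps)^{j-i}\,\mu_\eps(U_n)\,m(\{\xi:f^{j-i}(\xi)\in U_n\})$. For $n$ large, $U_n\subset\Lambda_p\setminus\overline{\Lambda}_{p+1}$ because $z$ lies in this open set and $e^{-u_n}\to0$; together with the nesting $\Lambda_{k+1}\subset\Lambda_k$ of Lemma~\ref{LEMLAMBDA}, this gives $U_n\subset\Lambda_k$ for $k\le p$ and $U_n\cap\Lambda_k=\emptyset$ for $k\ge p+1$. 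The change of variables $y=f^k(x)$ (equivalently, Koopman/transfer-operator duality together with $P^k{\bf 1}={\bf 1}_{\Lambda_k}J_k$ from the proof of Proposition~\ref{PROPDENSPID}) then gives $m(\{\xi:f^k(\xi)\in U_n\})=\int{\bf 1}_{\Lambda_k}(y)J_k(y){\bf 1}_{U_n}(y)\,dy$, which equals $\int J_k{\bf 1}_{U_n}\,dy$ when $k\le p$ and vanishes when $k\ge p+1$. Writing $k=j-i$ and summing over $i$ reproduces exactly the right-hand side of \eqref{RETURN}.

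The remaining point, which I expect to be the only genuine obstacle, is to show that the no-reset contribution $(1-\eps)^j\,\mu_\eps(\{x\in U_n:f^j(x)\in U_n\})$ is zero for every $j\ge1$. The key observation is that $f(\Lambda_k\setminus\Delta)=\Lambda_{k+1}$, so that if $x\in\Lambda_p$ and none of $x,f(x),\dots,f^{j-1}(x)$ lies in $\Delta$, then $f^j(x)\in\Lambda_{p+j}\subset\Lambda_{p+1}$, which is disjoint from $U_n$; hence no such $x$ returns to $U_n$. Since $f$ is non-singular the exceptional set $\bigcup_{l=0}^{j-1}\{y:f^l(y)\in\Delta\}$ is Lebesgue-null, and $\mu_\eps$ has a density with respect to the Lebesgue measure, so that term indeed vanishes and \eqref{RETURN} follows. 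Finally, \eqref{RETURNMAJ} is immediate: if $j-1\ge p$ the sum in \eqref{RETURN} runs up to $k=p$ and $\eps\sum_{k=0}^{p}(1-\eps)^k\int J_k{\bf 1}_{U_n}=\int_{U_n}h_\eps\,dx=\mu_\eps(U_n)$ by \eqref{DENSPC}, so the right-hand side equals $\mu_\eps(U_n)^2$; if $j\le p$ it is a partial sum of the same series of non-negative terms, hence at most $\mu_\eps(U_n)^2$.
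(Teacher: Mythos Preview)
Your proof is correct and follows essentially the same route as the paper's. The paper writes $\mathrm{Pr}_j=\int P_\eps^j(\mathbf{1}_{U_n}h_\eps)\,\mathbf{1}_{U_n}\,dx$ and then applies the iteration formula \eqref{ITPF} for $P_\eps^j$; your conditioning on the last reset time is exactly the probabilistic reading of that formula, so the two computations coincide term by term. Your treatment of the no-reset contribution (arguing that $f^j(x)\in\Lambda_{p+j}$ off a $\mu_\eps$-null set, hence $\mu_\eps(U_n\cap f^{-j}(U_n))=0$) is in fact slightly cleaner than the paper's set-theoretic argument that $U_n\cap f^j(U_n)=\emptyset$ via the decomposition $f^j(U_n)\subset\bigcup_{k=1}^{j}f^k(\Delta)\cup\Lambda_{p+j}$, but the content is the same. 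The derivation of \eqref{RETURNMAJ} is identical in both.
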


\begin{proof}

Let us denote
\[
\mbox{Pr}_j:=\mathbb{P}((x,\omega_1,\dots,\omega_j): x\in U_n, f^j_{\omg}(x)\in U_n)\qquad\forall\, j\geq 1.
\]
First of all, for all $j\geq 1$ we can write
\[
\mbox{Pr}_j = \int\int{\bf 1}_{U_n}(x){\bf 1}_{U_n}(f^j_{\underline \omega}(x))d\theta_\epsilon^\N d\mu_\epsilon
= \int {\bf 1}_{U_n}(x) U^j_\epsilon ({\bf 1}_{U_n})(x) d\mu_\epsilon
= \int P^j_\epsilon({\bf 1}_{U_n}h_\epsilon)(x){\bf 1}_{U_n}(x) dx.
\]
Using \eqref{ITPF}, we obtain that for all $j\geq 1$
\begin{eqnarray*}
\mbox{Pr}_j&=&\int P^j_\epsilon({\bf 1}_{U_n}h_\epsilon)(x){\bf 1}_{U_n}(x) dx\\
&=& (1-\epsilon)^j\int P^j({\bf 1}_{U_n}h_\epsilon)(x){\bf 1}_{U_n}(x) dx +
\epsilon \mu_\epsilon(U_n)\sum_{k=0}^{j-1}(1-\epsilon)^k\int P^k{\bf 1}(x){\bf 1}_{U_n}(x) dx\\
&=& (1-\epsilon)^j\int {\bf 1}_{U_n}(x){\bf 1}_{U_n}(f^j(x))h_\epsilon(x)dx +
\epsilon \mu_\epsilon(U_n)\sum_{k=0}^{j-1}(1-\epsilon)^k\int J_k(x){\bf 1}_{\Lambda_k}(x){\bf 1}_{U_n}(x) dx\\
&=& (1-\epsilon)^j\mu_\epsilon(U_n\cap f^{-j}(U_n)) + \epsilon \mu_\epsilon(U_n)\sum_{k=0}^{j-1}(1-\epsilon)^k\int J_k(x){\bf 1}_{\Lambda_k\cap U_n}(x) dx.
\end{eqnarray*}
Now suppose $n$ large enough for $U_n$ being a subset of $\Lambda_{p}\setminus\Lambda_{p+1}$. Then one can show that $f^j(U_n)\subset \bigcup_{k=1}^{j} f^k(\Delta)\cup\Lambda_{p+j}$ (using recurrence on $j$). Now, in the union $ \bigcup_{k=1}^{j} f^k(\Delta)$ we have that $\Lambda_p\cap f^k(\Delta)=\emptyset$ for any $k\leq p$. We also have $f^l(\Lambda_p)\cap f^{p+l}(\Delta)=\emptyset$ for any $l\in\N$, since $\Lambda_p\cap f^p(\Delta)=\emptyset$ and $f$ is injective.
Together with $\Lambda_{p+j}\subset\Lambda_p$, this implies that $U_n\cap f^j(U_n)=\emptyset$, and therefore $U_n\cap f^{-j}(U_n)=\emptyset$ for all $j\geq 1$. On the other hand, $U_n\cap \Lambda_k=\emptyset$ for all $k> p$. It follows that
\[
\mbox{Pr}_j=\epsilon \mu_\epsilon(U_n)\sum_{k=0}^{\min\{j-1,p\}}(1-\epsilon)^k\int J_k(x){\bf 1}_{U_n}(x) dx\qquad\forall j\geq 1,
\]
which prove \eqref{RETURN}. Now, it easy to show using \eqref{DENSPC} that
$\mbox{Pr}_j\leq\mbox{Pr}_{p+1}=\mu_\epsilon(U_n)^2$ for any $j\geq 1$.
\end{proof}

\begin{proposition}\label{EVLLN} Let $p\geq 1$ and $z\in \Lambda_{p}\setminus \overline{\Lambda}_{p+1}$, then the maxima of the process $X_n(x,\omg) :=-\ln d(f_{\omg}^n(x),z)$ verifies condition $D'(u_n).$
\end{proposition}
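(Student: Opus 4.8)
The plan is to reduce condition $D'(u_n)$ directly to the return estimate \eqref{RETURNMAJ} of the preceding lemma together with the asymptotics already established in Proposition \ref{ANBN}. First I would observe that, writing $U_n=B(z,e^{-u_n})$, the event $\{X_0>u_n\}$ coincides with $\{x\in U_n\}$ and $\{X_j>u_n\}$ with $\{f^j_{\omg}(x)\in U_n\}$, so that for every $j\geq 1$
\[
\mathbb{P}(X_0>u_n,\,X_j>u_n)=\mathbb{P}\big((x,\omega_1,\dots,\omega_j): x\in U_n,\ f^j_{\omg}(x)\in U_n\big).
\]
Since $z\in\Lambda_p\setminus\overline{\Lambda}_{p+1}$ and $(u_n)_{n\in\N}$ is increasing, for $n$ large enough $U_n\subset\Lambda_p\setminus\Lambda_{p+1}$, so the preceding lemma applies and gives $\mathbb{P}(X_0>u_n,X_j>u_n)\leq\mu_\eps(U_n)^2$ uniformly in $j\geq 1$.

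Next, I would fix a sequence $(k_n)_{n\in\N}$ satisfying \eqref{eq:kn-sequence-1}; such a sequence exists because the sequence $t_n=o(n)$ provided by condition $D_2(u_n)$ (itself guaranteed by the decay of correlations of Proposition \ref{Prop:Correlations}) forces $n/t_n\to\infty$, so one may take any $k_n\to\infty$ growing slowly enough that $k_nt_n=o(n)$. Then I estimate
\[
n\sum_{j=1}^{\lfloor n/k_n\rfloor}\mathbb{P}(X_0>u_n,\,X_j>u_n)\ \leq\ n\left\lfloor\frac{n}{k_n}\right\rfloor\mu_\eps(U_n)^2\ \leq\ \frac{1}{k_n}\big(n\,\mu_\eps(U_n)\big)^2 .
\]
By Proposition \ref{ANBN} (or already Proposition \ref{PT}), $n\,\mu_\eps(U_n)=n\,\mathbb{P}(X_0>u_n)\to\tau$, so the right-hand side is bounded and tends to $0$ since $k_n\to\infty$. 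This yields $\lim_{n\to\infty}n\sum_{j=1}^{\lfloor n/k_n\rfloor}\mathbb{P}(X_0>u_n,X_j>u_n)=0$, which is precisely $D'(u_n)$.

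I expect essentially no obstacle beyond the lemma: the whole difficulty of $D'(u_n)$ — excluding clustering of short returns — has already been absorbed into the uniform-in-$j$ bound \eqref{RETURNMAJ}, whose proof exploited that an orbit starting in $U_n\subset\Lambda_p\setminus\Lambda_{p+1}$ cannot return to $U_n$ by the deterministic dynamics (because $f^j(U_n)$ avoids $U_n$ for all $j\geq 1$) so that the only contribution comes from a reset occurring strictly before time $j$. The single point worth stressing is that \eqref{RETURNMAJ} holds for all $j\geq 1$ simultaneously once $n$ is large, which is exactly what makes the sum over $j\leq\lfloor n/k_n\rfloor$ collapse to a quantity of order $1/k_n$; no finer, $j$-dependent estimate is required.
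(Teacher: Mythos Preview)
Your proposal is correct and follows essentially the same argument as the paper: both reduce $D'(u_n)$ to the uniform bound \eqref{RETURNMAJ}, sum trivially over $j\leq\lfloor n/k_n\rfloor$ to obtain $(n\mu_\eps(U_n))^2/k_n$, and conclude via $n\mu_\eps(U_n)\to\tau$ and $k_n\to\infty$. Your additional remarks on the existence of a suitable $(k_n)$ and on why the lemma absorbs all the short-return difficulty are accurate but go slightly beyond what the paper spells out.
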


\begin{proof} Suppose $z\in\Lambda_{p}\setminus\overline\Lambda_{p+1}$ for some $p\geq 1$. Consider the sequence $(u_n(y))_{n\in\N}$ of Proposition \ref{ANBN}.
Let $(k_n)_{n\in\N}$ be such that $k_n\to\infty$ when $n\to\infty$ and $k_n=o(n)$. Supposing $n$ large enough and using \eqref{RETURNMAJ}, we deduce that
\[
n\sum_{j=1}^{\left\lfloor\frac{n}{k_n}\right\rfloor}\mathbb{P}((x,\omega_1,\dots,\omega_j): x\in U_n, f_{\underline{\omega}}^j(x)\in U_n)
\leq n\left\lfloor\frac{n}{k_n}\right\rfloor\mu_\epsilon(U_n)^2\leq \frac{(n\mu_\epsilon(U_n))^2}{k_n}.
\]
Using $\lim n\mu_{\eps}(U_n)=\tau$, we obtain that
\[
\lim_{n\to\infty}n\sum_{j=1}^{[\frac{n}{k_n}]}\mathbb{P}((x,\omega_1,\dots,\omega_j): x\in U_n, f_{\omega_j}\circ\cdots \circ f_{\omega_1}(x)\in U_n)=0,
\]
since $n\mu_\epsilon(U_n)\to\tau$ and $k_n\to\infty$ when $n\to\infty$.
In such a case,  $(X_n(x,\underline{\omega}))_{n\in\N}$ verifies conditions $D_2(u_n)$ and $D'(u_n)$ for our sequence $(u_n(y))_{n\in\N}$, wich implies that $\mathbb{P}(M_n\leq u_n(y))$ converges towards the Gumbel law $e^{-e^{-y}}$.
\end{proof}
Collecting all the results of this section we finally proved the following theorem
\begin{theorem}\label{THEVL1}
Suppose $f:X\to X$ satisfies Definition \ref{S1}. Then, for any $z\in(\Lambda_1\setminus\tilde{\Lambda})\setminus\cup_{k\geq 2}^{\infty}\partial\Lambda_k$, the sequence $\{M_n\}_{n\in\N}$ of the maxima of the process defined for every $n\in\N$ by $Y_n(x,\omg) :=-\log d(f_{\omg}^n(x), z)$ admits the Gumbel's law as extreme values distribution.
\end{theorem}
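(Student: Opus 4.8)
The plan is to assemble Theorem~\ref{THEVL1} from the pieces already laid out in this section, treating it essentially as a bookkeeping exercise: the real mathematical content is in Propositions~\ref{PT}, \ref{ANBN}, \ref{EVLLN} and \ref{Prop:Correlations}, and what remains is to verify that their hypotheses are simultaneously met for the given $z$ and to invoke the abstract criterion (Theorem~1 in \cite{FF}) which says that $D_2(u_n)$ plus $D'(u_n)$ plus the existence of the limit \eqref{eq:un} forces $\mathbb{P}(M_n\le u_n)\to e^{-\tau}$. First I would fix $z\in(\Lambda_1\setminus\tilde\Lambda)\setminus\bigcup_{k\ge2}\partial\Lambda_k$ and use the remark following \eqref{A} --- namely that $\tilde\Lambda=\bigcap_k\overline\Lambda_k$ and the $\Lambda_k$ are nested --- to argue that such a $z$ must lie in $\Lambda_p\setminus\overline\Lambda_{p+1}$ for some finite $p\ge1$. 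Indeed $z\in\Lambda_1=\Lambda_0\setminus\ldots$ wait, $z\in\Lambda_1$ but $z\notin\tilde\Lambda$ means $z\notin\overline\Lambda_k$ for some $k$; taking the smallest such $k$ and calling it $p+1$, nestedness of the $\overline\Lambda_k$ gives $z\in\overline\Lambda_p$, and since $z\notin\partial\Lambda_p$ (because $p\ge1$ and by the exclusion of the boundaries for indices $\ge2$, with the case $p=1$ handled by $z\in\Lambda_1$ directly) we get $z\in\Lambda_p$, hence $z\in\Lambda_p\setminus\overline\Lambda_{p+1}$. This is the one slightly fiddly point and I would write it carefully.

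Once $p$ is fixed, the sequence of levels $(u_n)_{n\in\N}$ is the explicit affine sequence of Proposition~\ref{ANBN}, with $a_n=D$ and $b_n$ determined by $m(B(z,e^{-b_n}))=(n h_\eps(z))^{-1}$; by that proposition this sequence satisfies $\lim_n n\mathbb{P}(Y_0>u_n)=\tau$ for $\tau=e^{-y}$, which is precisely the normalization \eqref{eq:un}. (Proposition~\ref{PT} would serve as a fallback guaranteeing \emph{some} such sequence exists for any $z\in X$, but here we want the explicit one so that the return-time estimates of the preceding lemma apply.) Next I would record that condition $D_2(u_n)$ holds: by the remark after Proposition~\ref{Prop:Correlations} the relevant $\psi$ here is the indicator ${\bf 1}_{U_n}$ of a ball, so only $h_\eps\in L^1$ is needed (which is automatic since $h_\eps$ is a probability density), and the exponential decay bound $2(1-\eps)^n\|\phi\|_{L^\infty}\|\psi h_\eps\|_{L^1}$ feeds into the standard argument --- identical to the proof of Theorem~D, Section~4.1 of \cite{AFV} --- that produces $\gamma(n,t)$ decreasing in $t$ with $n\gamma(n,t_n)\to0$ for a suitable $t_n=o(n)$. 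Then condition $D'(u_n)$ is exactly the content of Proposition~\ref{EVLLN}, applicable since we have arranged $z\in\Lambda_p\setminus\overline\Lambda_{p+1}$.

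With \eqref{eq:un}, $D_2(u_n)$ and $D'(u_n)$ all in hand for the \emph{same} sequence $(u_n)_{n\in\N}$, the cited abstract theorem yields $\lim_n\mathbb{P}(M_n\le u_n(y))=e^{-\tau}=e^{-e^{-y}}$, i.e. $\mathbb{P}(D(M_n-b_n)\le y)\to e^{-e^{-y}}$ for every $y\in\R$; since $y$ was arbitrary and this is precisely the Gumbel distribution function, the maxima of the process $Y_n(x,\omg)=-\log d(f^n_{\omg}(x),z)$ admit the Gumbel law as extreme value distribution, which is the assertion of the theorem. I do not expect a genuine obstacle: the only place demanding care is the topological argument locating $z$ in some $\Lambda_p\setminus\overline\Lambda_{p+1}$ with $p$ finite, and the (already routine in this literature, via \cite{AFV}) translation of Proposition~\ref{Prop:Correlations} into a verification of $D_2(u_n)$ --- everything else is direct citation of the propositions proved above.
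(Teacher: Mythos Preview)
Your proposal is correct and follows exactly the approach of the paper: the paper's own proof is the single sentence ``Collecting all the results of this section we finally proved the following theorem'', and you have simply spelled out how Propositions~\ref{PT}, \ref{ANBN}, \ref{Prop:Correlations} and \ref{EVLLN} (together with the criterion from \cite{FF}) combine. The topological step locating $z$ in some $\Lambda_p\setminus\overline{\Lambda}_{p+1}$ is stated by the paper just before Proposition~\ref{ANBN} without argument, so your careful treatment of it---in particular the handling of the case $p=1$ via $z\in\Lambda_1$ directly---is a welcome elaboration rather than a deviation.
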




\section{Clustering in the attractor of piecewise contracting maps}

In this section, we complete the results of the results of Theorem \ref{THEVL1} by studying the case when the maximum of the observable is achieved in the set $\tilde{\Lambda}$ of a piecewise contracting map. This creates clustering of exceedances which are responsible for the appearance of an Extremal Index less than $1$. To prove the existence of such an EVL, we follow the techniques developed in \cite{FFT12} that were recently upgraded in \cite{FFT14}. In particular, we will use the notation used in the latter.
Actually in \cite{FFT12} the authors introduced two conditions on the dependence structure of the stochastic processes in order to obtain convergence to an EVL under the presence of clustering. These conditions were called $D^p(u_n)$ and $D'_p(u_n)$ and essentially were adaptations of conditions $D_2(u_n)$ and $D'(u_n)$, specifically designed to cope with clustering. Later, in \cite{FFT14}, the authors introduced some general conditions called $\D(u_n)$ and $\D_q'(u_n)$, which polished and joined all the previous conditions, allowing to address the presence ($q\geq1$) and absence ($q=0$) of clustering at once. We recall this conditions here.

Let us begin to introduce a few notations. For $q\in \mathbb{N}_0$ we put
$$
A_n^{(q)}:=\{Y_0>u_n, Y_1\le u_n,\dots,Y_q\le u_n\}
$$
where again $Y_k,k\in \mathbb{N}_0$ is a $\mathbb{P}$-stationary process, that we will take as that studied in the preceding chapters.
For $s,\ell\in\N$ and an event $B$, let
\begin{equation}
\label{eq:W-def}
\mathscr W_{s,\ell}(B)=\bigcap_{i=s}^{s+\ell-1} T^{-i}(B^c).
\end{equation}

\begin{condition}[$\D(u_n)$]\label{cond:D} We say that $\D(u_n)$ holds for the sequence $Y_0,Y_1,\ldots$ if for every  $\ell,t,n\in\N$ and $q\in\N_0$,
\begin{equation}\label{eq:D1}
\left|\p\left(\A_n\cap
 \mathscr W_{t,\ell}\left(\A_n\right) \right)-\p\left(\A_n\right)
  \p\left(\mathscr W_{0,\ell}\left(\A_n\right)\right)\right|\leq \gamma(q,n,t),
\end{equation}
where $\gamma(q,n,t)$ is decreasing in $t$ for each $q, n$ and, for every $q\in\N_0$, there exists a sequence $(t_n)_{n\in\N}$ such that $t_n=o(n)$ and
$n\gamma(q,n,t_n)\to0$ when $n\rightarrow\infty$.
\end{condition}
For some fixed $q\in\N_0$, consider the sequence $(t_n)_{n\in\N}$, given by condition  $\D(u_n)$ and let $(k_n)_{n\in\N}$ be another sequence of integers such that
\begin{equation}
\label{eq:kn-sequence}
k_n\to\infty\quad \mbox{and}\quad  k_n t_n = o(n).
\end{equation}

\begin{condition}[$\D'_q(u_n)$]\label{cond:D'q} We say that $\D'_q(u_n)$
holds for the sequence $Y_0,Y_1,Y_2,\ldots$ if there exists a sequence $(k_n)_{n\in\N}$ satisfying \eqref{eq:kn-sequence} and such that
\begin{equation}
\label{eq:D'rho-un}
\lim_{n\rightarrow\infty}\,n\sum_{j=q+1}^{\lfloor n/k_n\rfloor-1}\p\left( \A_n\cap T^{-j}\left(\A_n\right)
\right)=0.
\end{equation}
\end{condition}
Now let
\begin{equation}
\label{eq:OBrien-EI}
\vartheta=\lim_{n\to\infty}\vartheta_n=\lim_{n\to\infty}\frac{\p(\A_n)}{\p(U_n)}.
\end{equation}

From \cite[Corollary~2.4]{FFT14}, it follows that if the stochastic process $Y_0, Y_1,\ldots$ satisfies conditions $\D(u_n)$ and $\D'_q(u_n)$ and the limit in \eqref{eq:OBrien-EI} exists then $\lim_{n\to\infty}\p(M_n\leq u_n)= \e^{-\vartheta\tau}$, where the sequence $(u_n)_{n\in\N}$ is such that $\lim_{n\to\infty}n\p(X_0>u_n)=\tau$.

\subsection{Analysis in periodic attractors}

Using the result just mentioned we will show that

\begin{theorem}\label{THEVL1} Suppose that $f:X\to X$ satisfies Definition \ref{S1}. Suppose moreover that $f$ is piecewise contracting, that is, there exists $\alpha\in(0,1)$ such that
for every $i\in\{1,\dots,N\}$ we have
\[
d(f(x),f(y))\leq\alpha d(x,y)\quad\forall\,x,y\in X_i.
\]
Suppose that the set $\tilde{\Lambda}$ does not contain any discontinuity point of $f$, and let $z\in\tilde{\Lambda}$. Consider the random perturbations defined in \eqref{RASP} and let $\p=\mu_\eps\times\theta_\eps^\N$, where $\mu_\eps$ is a stationary probability measure.
Let $Y_0, Y_1, \ldots$ be the stochastic process given by $Y_n(x,\omg) :=-\log d(f_{\omg}^n(x),\mathcal{O}(z))$, where $\mathcal{O}(z)$ is the orbit of $z$. Let $(u_n)_{n\in\N}$ be a sequence such that $\lim_{n\to\infty}n\p(X_0>u_n)=\tau\geq 0$. Then we have
$$
\lim_{n\to\infty}\p(M_n\leq u_n)=\e^{-\eps\tau}.
$$
\end{theorem}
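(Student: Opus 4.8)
The plan is to verify conditions $\D(u_n)$ and $\D'_q(u_n)$ for a suitably chosen $q$, and to compute the O'Brien limit $\vartheta$ in \eqref{eq:OBrien-EI}, then invoke \cite[Corollary~2.4]{FFT14}. The essential new phenomenon compared to Theorem \ref{THEVL1} (the version in the complement of the attractor) is that $z$ now lies on a periodic orbit $\mathcal{O}(z)$ of $f$ of some minimal period $\pi\geq 1$, since by \cite{CGMU} the attractor $\tilde{\Lambda}$ of a piecewise contracting map not meeting the discontinuities is a finite union of periodic orbits. The clustering comes from the fact that, once the random orbit enters a small ball around $\mathcal{O}(z)$ and the noise is not triggered (probability $1-\eps$ at each step), contractivity with factor $\alpha$ keeps the orbit inside the ball around the next point of $\mathcal{O}(z)$; exceedances then occur in blocks, and each block is broken only when the noise fires, i.e. with probability $\eps$ per step. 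This is exactly what should yield $\vartheta=\eps$. I would take $q=\pi-1$ (or a small multiple of $\pi$), so that $A_n^{(q)}$ captures "enter the neighborhood of $\mathcal{O}(z)$ and the noise fires within the next period," and $U_n = B(\mathcal{O}(z),e^{-u_n})$ is the full neighborhood of the orbit.

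First I would establish $\D(u_n)$: this is the long-range decorrelation statement, and just as in the previous section it follows from the annealed decay of correlations of Proposition \ref{Prop:Correlations}, applied with $\phi$ an indicator of the relevant event built from $U_n$ and $\psi = \mathbf{1}_{U_n^{(\text{something})}} h_\eps$; since $h_\eps\in L^1$ and the events are characteristic functions of (finite unions of) balls, the bound $2(1-\eps)^n\|\phi\|_{L^\infty}\|\psi h_\eps\|_{L^1}$ gives $\gamma(q,n,t) = 2(1-\eps)^t$ up to constants, which trivially satisfies the requirement with any $t_n\to\infty$, $t_n=o(n)$ — this uses the "remarkable" feature, noted in the remark after Proposition \ref{Prop:Correlations}, that the decay rate does not depend on $f$. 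Second, I would compute $\vartheta_n = \p(A_n^{(q)})/\p(U_n)$. Writing $\p(A_n^{(q)}) = \int \mathbf{1}_{U_n}(x)\,\p_{\theta}\!\left(\{Y_1\le u_n,\dots,Y_q\le u_n\}\mid x\right)\,d\mu_\eps(x)$ and decomposing on whether the noise fires in steps $1,\dots,q=\pi-1$: if the noise never fires (probability $(1-\eps)^{\pi-1}$) then by contractivity $f^j(x)$ stays in $B(f^j z, \alpha^j e^{-u_n})\subset U_n$ for $j\le \pi-1$, so the event $\{Y_1\le u_n,\dots\}$ fails; if the noise fires at some step $\le \pi-1$ the orbit is reset to a uniform point, which lies outside $U_n$ with probability $1-O(e^{-Du_n})\to 1$. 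Hence $\p(A_n^{(q)}) = (1-(1-\eps)^{\pi-1} + o(1))\,\mu_\eps(U_n)$... — wait, I must be careful: $A_n^{(q)}$ requires $Y_1\le u_n$ through $Y_q\le u_n$ ALL to hold, so the favorable scenario is "enter $U_n$ and then leave immediately and stay out for $\pi-1$ steps," which happens iff the noise fires at step $1$ (reset outside, probability $\eps(1-o(1))$) — the later positions are then generic. So $\vartheta_n \to \eps$. This is the computational heart and I would present it cleanly via the transfer-operator identity \eqref{ITPF} exactly as in the preceding Lemma, rather than probabilistically.

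Third, $\D'_q(u_n)$: I would bound $\p(A_n^{(q)}\cap T^{-j}(A_n^{(q)}))$ for $q+1\le j\le \lfloor n/k_n\rfloor$. Using the same transfer-operator computation as in the Lemma preceding Proposition \ref{EVLLN}, the deterministic return term $(1-\eps)^j \mu_\eps(U_n\cap f^{-j}(U_n))$ vanishes for the relevant range of $j$ because a point of $U_n$ that has already left $U_n$ (as enforced by $A_n^{(q)}$) cannot return under pure contraction without the noise, while the noise term contributes at most $\eps\,\mu_\eps(U_n)\sum_{k}(1-\eps)^k\int J_k \mathbf{1}_{U_n}\le \mu_\eps(U_n)^2$ as before; summing over $j$ gives $\le n\lfloor n/k_n\rfloor \mu_\eps(U_n)^2 \le (n\mu_\eps(U_n))^2/k_n \to 0$ since $n\mu_\eps(U_n)\to\tau/\vartheta$ is bounded and $k_n\to\infty$. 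With $\D(u_n)$, $\D'_q(u_n)$ and $\vartheta_n\to\eps$ in hand, \cite[Corollary~2.4]{FFT14} gives $\p(M_n\le u_n)\to e^{-\vartheta\tau}=e^{-\eps\tau}$.

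The main obstacle will be the careful bookkeeping around the periodic orbit: one must control the geometry of $f^j(U_n)$ near $\mathcal{O}(z)$ for $1\le j\le \pi$ (showing it stays in the neighborhood of the orbit under pure contraction, and is ejected with overwhelming probability when the noise fires), handle the possibility that distinct points of $\mathcal{O}(z)$ are close enough that their $e^{-u_n}$-balls overlap (harmless for $n$ large once $u_n\to\infty$, but needs a remark), and — most delicately — make sure the choice of $q$ is simultaneously $\ge \pi-1$ (so that $A_n^{(q)}$ really measures the broken cluster) and small enough that the $\D'_q$ sum starts late enough; since $\pi$ is a fixed finite number this is not a serious difficulty, but it is where the argument of \cite{FFT12,FFT14} must be instantiated with the specific dynamics. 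The identity $\vartheta=\eps$ is robust precisely because RASP resets to a point that, with probability tending to $1$, escapes any fixed shrinking neighborhood, so no contribution to the cluster length survives beyond the first noise-free run.
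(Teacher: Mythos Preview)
Your overall architecture is right and your computation of $\vartheta=\eps$ is essentially the paper's (including your ``wait'' moment: since the observable is the distance to the \emph{whole} orbit $\mathcal O(z)$, contractivity gives $f(U_n)\subset U_n$, so the only way to have $Y_1\le u_n$ after $Y_0>u_n$ is for the noise to fire at step~1). For exactly this reason the paper takes $q=1$ rather than your $q=\pi-1$; your choice is not wrong, but it adds bookkeeping without buying anything, because $A_n^{(\pi-1)}$ already forces the noise to fire at step~1.

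The genuine gap is in your verification of $\D'_q(u_n)$. You assert that ``a point of $U_n$ that has already left $U_n$ \dots\ cannot return under pure contraction without the noise,'' and use this to kill the deterministic term. This is false: $U_n$ is a neighbourhood of the global attractor, so a generic point $y\in U_n^c$ \emph{does} eventually enter $U_n$ under the unperturbed map $f$. Concretely, after the paper conditions on the first step (which, by $f(U_n)\subset U_n$, forces the orbit to reset to a Lebesgue-uniform point $y\in U_n^c$), one must bound
\[
\int P_\eps^{j-1}(\I_{U_n^c})(y)\,\I_{U_n}(y)\,dy
=(1-\eps)^{j-1}m\bigl(U_n^c\cap f^{-(j-1)}(U_n)\bigr)
+\eps\, m(U_n^c)\sum_{k=0}^{j-2}(1-\eps)^k m\bigl(f^{-k}(U_n)\bigr),
\]
and the first piece is \emph{not} zero. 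Your analogy with the Lemma preceding Proposition~\ref{EVLLN} breaks down precisely here: in that Lemma $z$ was outside the attractor, so $U_n\cap f^{-j}(U_n)=\emptyset$; now $z$ is on the attractor and preimages of $U_n$ fill up the space. The paper controls these terms by introducing $d_n:=k_0+\max\{k:U_n\subset\Lambda_k\}\to\infty$ and $\beta_n:=\alpha^{d_n}m(\Lambda_{k_0})\to 0$, obtaining $m(f^{-k}(U_n))\le \alpha^{-k}\beta_n$ for $k\le d_n$, then splitting the sum over $j$ at $d_n$ and treating the three cases $(1-\eps)\alpha^{-1}\gtrless 1$ separately. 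This is the main technical work of the proof and is absent from your proposal; the bound ``$\le\mu_\eps(U_n)^2$ as before'' does not follow.
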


It has been shown in \cite{CGMU} that for any piecewise contracting map defined in a compact space, if the global attractor $\bigcap_{k=1}^\infty\overline{\Lambda}_k$ does not intersect the set of the discontinuities, then it is composed of finite number of periodic orbits. Therefore, under this hypothesis $\mathcal{O}(z)$ is a periodic orbit of the map. If $p$ is the period of this orbit, then $\{Y_0>u_n\}$ occurs if and only if $x\in U_n$, where
\[
U_n=\bigcup_{i=0}^{p-1} B(f^i(z),\e^{-u_n}).
\]

\begin{remark}
Note that on the contrary to the usual choice for the observable function, here, we are considering that the maximum is achieved at every point of the whole periodic orbit, instead of at a single periodic point. In an ongoing work concerning EVLs for observables achieving a maximum on multiple correlated points, the second-named author with D. Azevedo, A.C.M. Freitas and  F.B. Rodrigues are developing a thorough study regarding the consequences of having multiple maxima in terms of the extremal behavior of the system. The important fact to retain here is that by considering the whole orbit we are performing a sort of reduction to the case where the period is 1.
\end{remark}

As in the previous section, Proposition \ref{PT} guaranties the existence of the a sequence $\{u_n\}_{n\in\N}$ such that the limit \eqref{eq:un} holds. However, as a side comment, we note that we can obtain an estimation of the sequence $\{u_n\}_{n\in\N}$ using a similar proof as the one of Proposition \ref{ANBN}, provided the density is bounded and continuous. Since $\tilde{\Lambda}\cap\Delta=\emptyset$, it is composed of finite set of periodic points that are not in $\Delta$ and using the definition of the sets $\Lambda_k$, it is easy to show that all the points of $\tilde{\Lambda}$ also belongs to $\Lambda$, i.e
$\tilde\Lambda=\Lambda$. Thus, we can argue that, under the hypothesis of Theorem \ref{THEVL1} and of Proposition \ref{FIN}, the
density has the desired properties on $\tilde{\Lambda}$ to obtain an estimation of the sequence $\{u_n\}_{n\in\N}$.

The proof of the theorem consists in showing that $Y_0, Y_1,\ldots$ satisfies conditions $\D(u_n)$ and $\D_q'(u_n)$ and moreover $\vartheta$ given in \eqref{eq:OBrien-EI} is well defined and equals $\eps$.
Since $\mathcal{O}(z)\cap\Delta=\emptyset$, for $n$ large enough each ball of the union defining $U_n$ is contained in one of the open pieces $X_i$, and because of the contraction we have that $f(U_n)\subset U_n$.
Therefore, we will consider $q=1$ and use $A_n^{(1)}=\{(x,\omg): x\in U_n, f_{\omg}(x)\notin U_n\}$.

\medskip
\noindent{\bf Verification of $\D(u_n)$:}

\noindent From the proof of Proposition~\ref{Prop:Correlations} we have:
\begin{equation}
\label{eq:correlation}
Cor_m(\phi,\psi,n) := \left|\iint \psi(x)  \phi(f_{\omg}^n(x))d\theta_\eps^\N(\omg)dx- \int \psi(x)dx\int \phi(x)d\mu_\eps(x)\right|\leq 2(1-\eps)^n\|\phi\|_\infty\|\psi\|_1.
\end{equation}
We will use this information about decay of correlations to verify condition $\D(u_n)$, in the case $q=1$. We have
\begin{align*}
\p\left(A_n^{(1)}\cap
 \mathscr W_{t,\ell}(A_n^{(1)})\right)&=\iint \I_{U_n}(x)\I_{U_n^c}(f_{\omega_1})(x))\I_{\mathscr W_{0,\ell}(A_n^{(1)})}(f_{\omg}^t(x))d\theta_\eps^\N(\omg)d\mu_\eps(x)\\
 &=\eps \iiint  \I_{U_n}(x)\I_{U_n^c}(y)\I_{\mathscr W_{0,\ell}(A_n^{(1)})}(f_{\sigma\omg}^{t-1}(y)) dy d\theta_\eps^\N(\omg)d\mu_\eps(x)\\
 &\quad+(1-\eps) \iint  \I_{U_n}(x)\I_{U_n^c}(f(x))\I_{\mathscr W_{0,\ell}(A_n^{(1)})}(f_{\sigma\omg}^{t-1}(f(x))) d\theta_\eps^\N(\omg)d\mu_\eps(x)\\
 &=\eps \mu_\eps(U_n) \iint \I_{U_n^c}(y)\I_{\mathscr W_{0,\ell}(A_n^{(1)})}(f_{\sigma\omg}^{t-1}(y)) d\theta_\eps^{t+\ell-1}(\omega_2,\ldots,\omega_{t+\ell}) dy\\
 &=\eps \mu_\eps(U_n) \iint \I_{U_n^c}(y)\phi(f_{\sigma\omg}^{t-1}(y)) d\theta_\eps^{t-1}(\omega_2,\ldots,\omega_{t}) dy,
\end{align*}
where
$$
\phi(x)=\int \I_{\mathscr W_{0,\ell}(A_n^{(1)})} (x) \theta_\eps^\ell(\tilde\omega_1,\ldots,\tilde \omega_\ell).
$$
In the third equality above we used the fact that if $f(U_n)\subset U_n$ and consequently $\I_{U_n}(x)\cdot\I_{U_n^c}(f(x))=0$ for all $x\in X$. We will use this fact again in the following computation:
\begin{align}
\p\left(A_n^{(1)}\right)&=\iint \I_{U_n}(x)\I_{U_n^c}(f_{\omega_1})(x))d\theta_\eps^\N(\omg)d\mu_\eps(x)
 =\eps \iint  \I_{U_n}(x)\I_{U_n^c}(y) dy d\mu_\eps(x)\nonumber\\
 &\quad+(1-\eps) \int  \I_{U_n}(x)\I_{U_n^c}(f(x)) d\mu_\eps(x)=\eps \mu_\eps(U_n) \int \I_{U_n^c}(y) dy.
 \label{eq:p(An)}
\end{align}
Also
$$
\p\left(W_{0,\ell}(A_n^{(1)})\right)=\iint \I_{\mathscr W_{0,\ell}(A_n^{(1)})} (x) \theta_\eps^\N(\omg)d\mu_\eps(x) = \int \phi(x) d\mu_\eps(x).
$$
It follows by \eqref{eq:correlation}
\begin{align*}
\Bigg|\p\left(A_n^{(1)}\cap
 \mathscr W_{t,\ell}(A_n^{(1)})\right)&-\p\left(A_n^{(1)}\right)\p\left(W_{0,\ell}(A_n^{(1)})\right)\Bigg|\\&
 =\eps\mu_\eps(U_n)\left|
 \iint \I_{U_n^c}(y)\phi(f_{\sigma\omg}^{t-1}(y)) d\theta_\eps^\N(\omg) dy- \int \I_{U_n^c}(y) dy \int \phi(x) d\mu_\eps(x)
  \right|\\
  &= \eps\mu_\eps(U_n)Cor_m(\phi,\I_{U_n^c},t-1)\leq 2\eps\mu_\eps(U_n) (1-\eps)^t
\end{align*}
Hence, we may take $t_n=\log n$, for example, and $\D(u_n)$ is easily verified with $\gamma(n,t)=2\eps\mu_\eps(U_n) (1-\eps)^t$.\\

\medskip
\noindent{\bf Verification of $\D'_q(u_n)$:}

\noindent For $j>1$, let $\mbox{Pr}^{(1)}_j=\p(\{(x,\omg): x\in A_n^{(1)},\; f_{\omg}^j(x)\in A_n^{(1)}\} )$. We have
\begin{align*}
\mbox{Pr}^{(1)}_j&=\iint \I_{U_n}(x)\cdot\I_{U_n^c}(f_{\omg}(x))\cdot\I_{U_n}(f_{\omg}^j(x))\cdot\I_{U_n^c}(f_{\omg}^{j+1}(x))\;d\theta_\eps^\N\; h_\eps(x)\;dx\\
&= \eps \iiint \I_{U_n}(x)\cdot\I_{U_n^c}(y)\cdot\I_{U_n}(f_{\sigma\omg}^{j-1}(y))\cdot\I_{U_n^c}(f_{\sigma\omg}^{j}(y))\;dy\;d\theta_\eps^\N\; h_\eps(x)\;dx\\
&\quad+ (1-\eps)\iint \I_{U_n}(x)\cdot\I_{U_n^c}(f(x))\cdot\I_{U_n}(f_{\sigma\omg}^{j-1}(f(x)))\cdot\I_{U_n^c}(f_{\sigma\omg}^{j}(f(x)))\;d\theta_\eps^\N\; h_\eps(x)\;dx\\
&=\eps\int \I_{U_n}(x)h_\eps(x)\,dx \iint \I_{U_n^c}(y)\cdot\I_{U_n}(f_{\sigma\omg}^{j-1}(y))\cdot\I_{U_n^c}(f_{\sigma\omg}^{j}(y))\;dy\;d\theta_\eps^\N\\
&\leq \eps \mu_\eps(U_n)\iint \I_{U_n^c}(y)\cdot\I_{U_n}(f_{\sigma\omg}^{j-1}(y))\;d\theta_\eps^\N\;dy= \eps \mu_\eps(U_n)\iint \I_{U_n^c}(y)\cdot U_\eps^{j-1} (\I_{U_n})(y)\;dy\\
&= \eps \mu_\eps(U_n)\iint P_\eps^{j-1}(\I_{U_n^c})(y)\cdot \I_{U_n}(y)\;dy,
\end{align*}
where in the third equality we used the fact that if $x\in U_n$ then $f(x)\notin U_n^c$ and consequently $\I_{U_n}(x)\cdot\I_{U_n^c}(f(x))=0$ for all $x\in X$.

Now, using \eqref{ITPF}, we define:
\begin{align*}
\iint P_\eps^{j-1}(\I_{U_n^c})(y)\cdot \I_{U_n}(y)\;dy&=(1-\eps)^{j-1}\!\!\!\int P^{j-1}(\I_{U_n^c})(y)\I_{U_n}(y)dy+\eps m(U_n^c)\sum_{k=0}^{j-2}(1-\eps)^k\!\!\!\int P^k\I(y)\I_{U_n}(y)dy\\
&=(1-\eps)^{j-1}m(U_n^c\cap f^{j-1}(U_n))+\eps m(U_n^c)\sum_{k=0}^{j-2}(1-\eps)^k m(f^{-k}(U_n))\\
&=: I_j+I\!I_j
\end{align*}
Note that to check \eqref{eq:D'rho-un} we need to show that $n\sum_{j=2}^{\lfloor n/k_n\rfloor-1} \mbox{Pr}^{(1)}_j$ vanishes. But $n\sum_{j=2}^{\lfloor n/k_n\rfloor-1} \mbox{Pr}^{(1)}_j\leq n\eps \mu_\eps(U_n) \sum_{j=2}^{\lfloor n/k_n\rfloor-1} I_j+ I\!I_j$. Since $\lim_{n\to\infty}n\mu_\eps(U_n)=\tau>0$, then to verify $\D'_q(u_n)$ we only need to check that both  $\sum_{j=2}^{\lfloor n/k_n\rfloor-1} I_j$ and $\sum_{j=2}^{\lfloor n/k_n\rfloor-1} I\!I_j$ vanish as $n\to\infty$.

Since $\Lambda\cap\Delta=\emptyset$, there exists $k_0:=\min\{k\in\N : \Lambda_k\cap\Delta=\emptyset\}$ and since the sets $\Lambda_k$ are open and satisfy $\Lambda_{k+1}\subset\Lambda_k$ for all $k\in\N$, for a $n$ large enough there is a $k'\geq k_0$ such that $U_n\subset\Lambda_k'$. Therefore, we can define $d_n:=k_0+d'_n$, where $d'_n:=\max\{k\in\N : U_n\subset\Lambda_k\}$. Note that $\lim_{n\to\infty} d_n=\infty$, since the distance $d(\Lambda_k,\Lambda)\to 0$ when $k\to\infty$. Moreover, as for any $k\geq k_0$ we have $\Lambda_{k+1}=f(\Lambda_{k})$, it follows that $f^{-k}(U_n)\subset f^{-k}(\Lambda_{k_0+d_n})=f^{d_n-k}(\Lambda_{k_0})$ for all
$k\in\{0,1,\dots,d_n\}$. Also, for any $k\in\{0,1,\dots,d_n\}$, we have
\[
m(f^{-k}(U_n))\leq m(f^{d_n-k}(\Lambda_{k_0}))\leq\alpha^{-k}\beta_n\quad\text{where}\quad\beta_n:=\alpha^{d_n}m(\Lambda_{k_0}).
\]
Now we can check \eqref{eq:D'rho-un} starting with $\sum_{j=2}^{\lfloor n/k_n\rfloor-1} I_j$ and splitting the sum
$$\sum_{j=2}^{\lfloor n/k_n\rfloor-1} I_j= \sum_{i=1}^{\lfloor n/k_n\rfloor-2}I_{i+1}= \sum_{i=1}^{d_n} I_{i+1}+ \sum_{i=d_n+1}^{\lfloor n/k_n\rfloor-2} I_{i+1}.$$
For the second sum on the right we have:
\begin{align*}
\sum_{i=d_n+1}^{\lfloor n/k_n\rfloor-2} I_{i+1}&\leq\sum_{i=d_n+1}^{\infty} (1-\eps)^i\leq \eps^{-1}(1-\eps)^{d_n+1}\xrightarrow[n\to\infty]{}0,\quad\mbox{because $\lim_{n\to\infty}d_n=\infty$.}
\end{align*}
For the first sum on the right, observe that
\begin{align*}
\sum_{i=1}^{d_n} I_{i+1}&=\sum_{i=1}^{d_n} (1-\eps)^i m(U_n^c\cap f^{-i}(U_n))\leq \sum_{i=1}^{d_n} (1-\eps)^i m(f^{-i}(U_n))\leq \beta_n \sum_{i=1}^{d_n} (1-\eps)^i\alpha^{-i}.
\end{align*}
Now to analyze the asymptotic behavior of $\beta_n \sum_{i=1}^{d_n-1} (1-\eps)^i\alpha^{-i}$ we
 consider three different cases. We assume first that $(1-\eps)\alpha^{-1}>1$.

\begin{align*}
\beta_n\sum_{i=1}^{d_n}(1-\eps)^i\alpha^{-i}&= \beta_n\frac{[(1-\eps)\alpha^{-1}]^{d_n+1}-(1-\eps)\alpha^{-1}}{(1-\eps)\alpha^{-1}-1}\leq\alpha^{d_n} m(\Lambda_{k_0})\frac {[(1-\eps)\alpha^{-1}]^{d_{n+1}}}{(1-\eps)\alpha^{-1}-1}\\
&\leq \frac{m(\Lambda_{k_0})}{(1-\eps)-\alpha}(1-\eps)^{d_{n+1}}\xrightarrow[n\to\infty]{}0, \mbox{ since $\lim_{n\to\infty}d_n=\infty$.}
\end{align*}
In the case $(1-\eps)\alpha^{-1}<1$, then
$$
\beta_n \sum_{i=1}^{d_n} (1-\eps)^i\alpha^{-i}\leq \beta_n \sum_{i=1}^{\infty} (1-\eps)^i\alpha^{-i}\xrightarrow[n\to\infty]{}0 , \mbox{ since $\lim_{n\to\infty}\beta_n=0$.}
$$
In the case $(1-\eps)\alpha^{-1}=1$, then
$$
\beta_n \sum_{i=1}^{d_n} (1-\eps)^i\alpha^{-i}\leq \beta_n d_n \leq d_n\alpha^{d_n}\xrightarrow[n\to\infty]{}0, \mbox{ since $\lim_{n\to\infty}d_n=\infty$.}
$$

Now, we turn to $\sum_{j=2}^{\lfloor n/k_n\rfloor-1} I\!I_j$. Let $\beta_n$ and $d_n$ be defined as above, then

\begin{align*}
\sum_{j=2}^{\lfloor n/k_n\rfloor-1}I\!I_j&=\sum_{i=0}^{\lfloor n/k_n\rfloor-3}I\!I_{i+2}
=\sum_{i=0}^{d_n} \eps m(U_n^c)\sum_{k=0}^{i}(1-\eps)^k m(f^{-k}(U_n))+\sum_{i=d_n+1}^{\lfloor n/k_n\rfloor-3}\eps m(U_n^c)\sum_{k=0}^{i}(1-\eps)^k m(f^{-k}(U_n))\\
&\leq\sum_{i=0}^{d_n}\eps\sum_{k=0}^{i}(1-\eps)^k \alpha^{-k}\beta_n+\sum_{i=1}^{\lfloor n/k_n\rfloor}\eps \sum_{k=0}^{\infty}(1-\eps)^k m(f^{-k}(U_n))\\
&\leq\eps\beta_n\left(1+\sum_{i=1}^{d_n}\sum_{k=0}^{i}(1-\eps)^k \alpha^{-k}\right)+\frac{n}{k_n}\mu_\eps(U_n)\\
\end{align*}
Here we have used \eqref{DENS} to obtain
\[
\mu_\eps(U_n)=\int\I_{U_n}(x)h_\eps(x)dx=\eps\sum_{k=0}^{\infty}(1-\eps)^k\int P^k\I(x)\I_{U_n}(x)dx=\eps\sum_{k=0}^{\infty}(1-\eps)^k m(f^{-k}(U_n)).
\]

The second term on the right tends to $0$ when $n$ goes to infinity, since $n\mu_\eps(U_n)\to\tau$ and $k_n\to\infty$ when $n$ goes to $\infty$. For the first term we assume first that $(1-\eps)\neq \alpha$. Then,
\begin{align*}
\eps\beta_n\left(1+\sum_{i=1}^{d_n}\sum_{k=0}^{i}(1-\eps)^k \alpha^{-k}\right)&\leq  \eps\beta_n+\frac{\eps\alpha}{(1-\eps)-\alpha}\beta_n\sum_{i=1}^{d_n}[(1-\eps)\alpha^{-1}]^{i+1}\\
&= \eps\beta_n+\frac{\eps(1-\eps)}{(1-\eps)-\alpha}\beta_n\sum_{i=1}^{d_n}(1-\eps)^{i}\alpha^{-i}
\end{align*}
Since, as we have seen above, $\lim_{n\to\infty}  \beta_n \sum_{i=1}^{d_n}(1-\eps)^i\alpha^{-i}=0$, then $\lim_{n\to\infty}\sum_{j=2}^{\lfloor n/k_n\rfloor-1}I\!I_j=0$. Now, if $(1-\eps)\neq \alpha$, we have
\begin{align*}
\eps\beta_n\left(1+\sum_{i=1}^{d_n}\sum_{k=0}^{i}(1-\eps)^k \alpha^{-k}\right)&=\eps m(\Lambda_{k_0})\left(\alpha^{d_n}+d_n\alpha^{d_n}+\frac{1}{2}d_n^2\alpha^{d_n}\right)\xrightarrow[n\to\infty]{}0.
\end{align*}

\medskip
\noindent{\bf Convergence of $\vartheta_n$:}

\noindent As in equation \eqref{eq:OBrien-EI} we consider $\vartheta_n=\frac{\p(A_n^{(1)})}{\p(U_n)}$. By \eqref{eq:p(An)} we have
$$
\vartheta_n=\frac{\p(A_n^{(1)})}{\p(U_n)}=\frac{\eps\mu_\eps(U_n)m(U_n^c)}{\mu_\eps(U_n)}=\eps m(U_n^c)\xrightarrow[n\to\infty]{}\eps=:\vartheta.
$$
This means that the Extremal Index $\vartheta$ is given by the noise level $\eps$.

\medskip\noindent\textbf{Acknowledgement}. JMF was partially supported by FCT grant SFRH/BPD/66040/2009 (financed by the program POPH/FSE) and also by CMUP, which is financed by FCT (Portugal) through the programs POCTI and POSI, with national and European structural funds, under the project PEst-C/MAT/UI0144/2013. JMF and SV are also supported by FCT (Portugal) project PTDC/MAT/120346/2010, which is financed by national and European structural funds through the programs  FEDER and COMPETE. PG was supported by CONICYT project Anillo ACT1112. SV was supported by the ANR- Project {\em Perturbations}, by the project {\em Atracci\'on de Capital Humano Avanzado del Extranjero} MEC 80130047, CONICYT, at the CIMFAV, University of Valparaiso, by the projet {\em MODE TER COM} supported by Region PACA, France and by the project {\em BREUDS}, Brazilian-European partnership in
Dynamical Systems.

\end{document}